

\documentclass[11pt]{article} 

\usepackage[utf8]{inputenc} 

\usepackage{amsthm}

\newtheorem{theorem}{Theorem}[section]

\newtheorem{proposition}[theorem]{Proposition}


\usepackage{geometry} 
\geometry{a4paper} 

\usepackage{graphicx} 

\usepackage{amsmath}
\usepackage{amssymb}

\usepackage{bm}

\usepackage[parfill]{parskip} 

\usepackage{booktabs} 
\usepackage{array} 
\usepackage{paralist} 
\usepackage{verbatim} 
\usepackage{subfig} 

\usepackage{fancyhdr} 
\pagestyle{fancy} 
\lhead{}\chead{}\rhead{}
\lfoot{}\cfoot{\thepage}\rfoot{}

\usepackage{sectsty}
\allsectionsfont{\sffamily\mdseries\upshape} 

\usepackage[nottoc,notlof,notlot]{tocbibind} 
\usepackage[titles,subfigure]{tocloft} 


\usepackage{spverbatim}
\usepackage{listings}

\usepackage{caption}  

\captionsetup{skip=-60pt} 



\title{Strategies in a mis\`{e}re two-player tree search game}
\author{Ben Andrews \\ \small University of Sheffield \\ \small benand34@gmail.com \\ \small 0009-0000-7692-3117}

\begin{document}
\maketitle

\begin{abstract}
In this paper, we analyse a mis\`{e}re tree searching game, where players take turns to guess vertices in a tree with a secret `poisoned' vertex. After each turn, the guessed vertex is removed from the tree and the game continues on the component containing the poisoned vertex, and as soon as a player guesses the poisoned vertex, they lose. We describe and prove the solution when the game is played on a path graph, both between two optimal players and between a player who makes their decisions uniformly at random and an opponent who plays to exploit this. We show that, with two perfect players, the solution involves different guessing strategies depending on the value of $n$ modulo $4.$ We then show that, with a random and an exploitative player, the probability that the exploitative player wins approaches a constant (approximately $0.599)$ as $n$ increases, and that the vertices one away from the leaves of the path are always optimal guesses for them. We also solve the game played on a star graph, and briefly discuss the possibility for extending the analysis to more general trees.
\end{abstract}

\textbf{Keywords:} mis\`{e}re game, tree search game, tree, graph theory, exploitative strategy, recurrence.

\textbf{Acknowledgements.} The author would like to thank the \emph{Applied Probability Trust} for funding their postgraduate studies and thus making this research possible. The author would also like to thank Jonathan Jordan for his supervision and continued support and advice on this work. The author would also like to thank James Cranch for his interest in this project and his proof reading of and suggestions for this paper, and for suggesting and running a test using the SageMath algebraic dependency method \cite{sage} to find that $a$ is likely not the solution of a polynomial. Additionally, the author would like to thank Ravi Boppana, for bringing the tree search game to their attention by email.

\section{Overview and main results}

\subsection{Introduction and definitions}

Our aim is to study a mis\`{e}re tree search game, a played-to-lose variant of the competitive tree searching game of Boppana and Lewis \cite{treesearch}. The game is played between two players on a starting tree $T,$ where one of the vertices is `poisoned'; neither player knows where it is. The players take turns to guess vertices of the tree. If a player guesses the poisoned vertex, they lose, and otherwise, the vertex they guessed is cut from the tree, splitting it into several components. It will then be revealed which component contains the poisoned vertex, and the game will continue on this component. The game is reminiscent of the random process known as `cutting down trees,' introduced by Meir and Moon in 1970 \cite{meir-moon} and an active area of research (see, for example, \cite{markov-chainsaw} and \cite{random-cuttings}).

In Boppana and Lewis's search game, the players were aiming to find the hidden `target' vertex. They described the winning probabilities and best strategies for two optimal players for any tree. They also considered an alternative pair of strategy profiles, where one player plays uniformly at random, and the other plays optimally with this in mind, trying to maximise their winning probability by exploiting the behaviour of the random player. They described the winning probability and the best strategy for the exploitative player in the cases of path graphs and stars. They also considered the case with two random players. In this case, since the players have no strategy, nothing changes when the mis\`{e}re variant is played, except that the definition and probabilities of a win and a loss are swapped, so we will not conduct a new analysis of this case. They suggested the mis\`{e}re model, where players play to avoid guessing the secret vertex, as an extension to their model for future research. 

In this paper, we will analyse the mis\`{e}re game, both with two optimal players and with a random player, known as $R,$ and an exploitative player, known as $X.$ We notate the game played on a general tree $T$ with player $1$ playing first and player $2$ playing second as $MS(T).$ We will focus on the case of a path graph for most of the paper, but we will also solve the case with a star graph in section $4$, and discuss the possibility for extension of the analysis to a general tree in section $5.$

In the case of a path graph, the game can be thought of as being played on the integers from $1$ to $n,$ where players try to avoid guessing the secret number, which we will refer to as the `mine.' This can be thought of as a competitive variant of the higher-or-lower children's guessing game. This game can also be seen online on Novel Games \cite{novelgames}. In sections $2$ and $3,$ where we analyse path graphs, we will use the language of this guessing game played on the integers.

When playing on a path graph, note that there is a symmetry over the range $[1,n].$ Guessing $1$ is equivalent in value to guessing $n,$ as these numbers are both on the extreme ends of the range and reduce it in the same way (to a range of size $n - 1$). Further, guessing $2$ is equivalent in value to guessing $n - 1,$ as they both have the same possiblities for how they divide the range (either into a range of a single number, or of $n - 2$ numbers). This symmetry continues all the way to the numbers in the middle; in general, guessing a number $m$ has the same success probability and consequences as guessing $n+1-m.$

Additionally, note that after each turn, the game reduces to simpler versions of itself (with lower $n)$ after each turn. The new $n$ is the amount of candidate numbers that remain, which all have equal probabilities of being the mine, and the second player, whose turn it is now, plays as the first player in the game with this lower $n.$ 

\subsection{Main results}

Let $P_n$ be the path graph with $n$ vertices.
The behaviour when both players play optimally on a path graph is described in full by the following theorem.

\begin{theorem}
Assuming that both players play optimally, and know this about each other, the probability that player $1$ wins in $MS(P_n)$ is given by
\begin{equation}
	\label{perfect_1}
	1/2\textrm{ when n is even;}
\end{equation}
\begin{equation}
	\label{perfect_2}
	\frac{2k}{4k+1}\textrm{ when }n = 4k+1 \textrm{ for some integer k;}
\end{equation}
\begin{equation}
	\label{perfect_3}
	\frac{2k+2}{4k+3}\textrm{ when }n = 4k+3\textrm{ for some integer k.}
\end{equation}

Further, the guesses that achieve these maximum winning probabilities are described as follows.

\begin{enumerate}
\item When $n = 4k$ for some $k,$ player 1 should choose a number that divides the game into ranges of sizes $4k_1 + 1$ and $4k_2 + 2,$ for any choice of $k_1, k_2$ with $k_1 + k_2 = k - 1.$
\item When $n = 4k + 1$ for some $k,$ all of player 1's choices are of equal strength.
\item When $n = 4k + 2$ for some $k,$ player 1 should choose a number that divides the game into ranges of sizes $4k_1 + 1$ and $4k_2,$ for any choice of $k_1, k_2$ with $k_1 + k_2 = k.$
\item When $n = 4k + 3$ for some $k,$ player 1 should choose a number that divides the game into ranges of sizes $4k_1 + 1$ and $4k_2 + 1,$ for any choice of $k_1, k_2$ with $k_1 + k_2 = k.$
\end{enumerate}
\end{theorem}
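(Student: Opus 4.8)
The plan is to reduce the entire theorem to a single real-valued recurrence and then to a finite check modulo $4$. Write $p_n$ for the probability that the player to move wins $MS(P_n)$ under optimal play, so that $p_1 = 0$, since the mover is forced to guess the unique mine. Conditioning on the mover's guess $m$ and on the location of the mine, and using the observation from the introduction that after a guess the game continues with the \emph{other} player to move first, I would establish
$$p_n = \frac{1}{n}\max_{\substack{a+b=n-1\\ a,b\ge 0}}\bigl[\,a(1-p_a) + b(1-p_b)\,\bigr].$$
Guessing $m$ splits the range into blocks of sizes $a = m-1$ and $b = n-m$; the mine lands in the size-$a$ block with probability $a/n$, and the mover then wins exactly when the opponent loses the resulting subgame, which occurs with probability $1-p_a$, while an immediate hit contributes nothing.

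The key device is the weighted quantity $g(n) := n(1-p_n)$, for which the recurrence collapses to
$$g(n) = n - \max_{a+b=n-1}\bigl[\,g(a)+g(b)\,\bigr], \qquad g(0)=0.$$
Computing the first several values suggests $g(4k)=2k$ and $g(4k+1)=g(4k+2)=g(4k+3)=2k+1$; equivalently, the centred sequence $h(n) := g(n) - n/2$ is periodic modulo $4$, taking the values $0,\tfrac12,0,-\tfrac12$ on the residues $0,1,2,3$. I would record this closed form as the central lemma and prove it by strong induction on $n$.

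For the inductive step, substituting $g(a)=a/2+h(a)$ turns the optimization into
$$\max_{a+b=n-1}\bigl[\,g(a)+g(b)\,\bigr] = \frac{n-1}{2} + \max_{a+b=n-1}\bigl[\,h(a)+h(b)\,\bigr],$$
and since $h$ depends only on the residue class, the inner maximum becomes a finite computation over the four residues of $a$ (with $b\equiv n-1-a \pmod 4$). Carrying this out case by case on $n \bmod 4$ yields the maximizing residue pairs — $(1,2)$ when $n\equiv 0$, all pairs tied when $n\equiv 1$, $(0,1)$ when $n\equiv 2$, and $(1,1)$ when $n\equiv 3$ — together with the maximum values $n/2$, $(n-1)/2$, $n/2$, $(n+1)/2$ respectively. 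Subtracting from $n$ reproduces the claimed value of $g(n)$, closing the induction, and $p_n = 1 - g(n)/n$ then gives the three probability formulas. The four maximizing residue pairs, translated back through $a=m-1$ and $b=n-m$, are exactly the four optimal-split descriptions in the enumerated list.

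The main obstacle, and the one point requiring genuine care rather than bookkeeping, is the achievability half of the inductive step: the residue analysis identifies which pair $(a \bmod 4, b \bmod 4)$ is optimal, but I must confirm that this residue pair is actually realized by a genuine split with $a,b\ge 0$ and $a+b=n-1$, and that the induction hypothesis applies to both parts. This holds automatically once $n$ exceeds a small threshold, so I would dispose of the few smallest $n$ as explicit base cases and then argue the general step. The accompanying upper bound — that no split can exceed the claimed maximum — is immediate from the same residue computation, since $h(a)+h(b)$ never surpasses the tabulated optimum.
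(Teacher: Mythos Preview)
Your proposal is correct and takes a genuinely different route from the paper. The paper argues by direct strong induction: for each residue class of $n$ modulo $4$ it enumerates the possible residue pairs $(a\bmod 4,\,b\bmod 4)$ of a split, plugs the inductive formulas for the winning probabilities $p_a,p_b$ straight into $\frac{a}{n}(1-p_a)+\frac{b}{n}(1-p_b)$, and compares the resulting fractions to pick out the best split. Your argument instead passes to the weighted loss $g(n)=n(1-p_n)$, which turns the recursion into the clean additive form $g(n)=n-\max_{a+b=n-1}[g(a)+g(b)]$, and then centers to $h(n)=g(n)-n/2$ to expose that the optimization depends only on residues. This is a real simplification: the four-by-four table of $h(a)+h(b)$ replaces the paper's separate fraction computations for each split type in each case, and it makes transparent \emph{why} the answer is periodic mod $4$ rather than merely verifying that it is. The paper's approach, on the other hand, requires no auxiliary quantities and keeps the game-theoretic meaning of each term visible throughout. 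Your identification of achievability of the optimal residue pair as the one non-routine point is apt; as you note, it is handled by a handful of small base cases (in fact $n\le 4$ suffices, as in the paper), after which every optimal residue pair is realized by some nonnegative split.
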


This is proven in section 2. The proof uses the method of strong induction, with four distinct classes of cases representing the different remainders of $n$ when divided by four.

We will also describe the solution with a random and an exploitative player. Intuitively, guessing $2$ or $n-1$ is a logical strategy for $X$ because it always gives them a $1/n$ probability of winning immediately, when the mine is the extreme number on the side of their guess, and if the game otherwise continues it remains of large size, allowing more possiblities for $R$ to make theoretical mistakes as the game goes on. Under the assumption that $X$ does play this strategy, let $p_n$ be the probability that $X$ wins $MS(P_n)$ as the first player, and let $q_n$ be the probability that they win $MS(P_n)$ if they are the second player. Then, define $s_n = \sum_{i=1}^n ip_i,$ and finally, define $a_n$ such that \linebreak
$s_n = a_n(n^2 + 7n + 6) - 2(n+1),$ with $a = \lim_{n \to \infty} a_n.$

Define $P(T)$ and $Q(T)$ to be the probabilities that $X$ wins $MS(T)$ against $R$ when they are the first player and the second player respectively.

\begin{theorem}
All possible optimal strategy profiles for $X$ have them always guessing one of the numbers $2$ or $n-1$ in $MS(P_n),$ and $$\lim_{n \to \infty} P(P_n) = \lim_{n \to \infty} Q(P_n) = 2a.$$ 
The value of $2a$ to ten decimal places is $0.5988890438.$
\end{theorem}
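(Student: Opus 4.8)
The plan is to work with the recurrences governing $p_n$ and $q_n$ under the assumption that $X$ always guesses $2$ or $n-1$, and to show two things separately: first, that this guessing strategy is in fact optimal, and second, that the limiting winning probability equals $2a$. I would tackle these in the reverse of that logical order, since establishing the limit is the more constructive task and the optimality proof will likely reuse the recurrences derived for it.

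First I would set up the recurrence for $p_n$ by conditioning on the location of the mine after $X$ guesses $2$ (by symmetry, guessing $n-1$ is identical). With probability $1/n$ the mine is at position $1$ and $X$ wins immediately; otherwise the game continues on a path of size $n-1$ with $R$ to move, so the continuation probability is expressed through $q_{n-1}$. To evaluate $q_{n-1}$ I must average over $R$'s uniformly random guess on the surviving path, each of which either loses immediately for $R$ (a win for $X$) or splits the path into two subpaths, on one of which the game recurses with $X$ to move again. This produces a coupled pair of recurrences, and the summation over $R$'s guesses is exactly what motivates the definition $s_n = \sum_{i=1}^n i\,p_i$: the weighted partial sums let me collapse the double sum in the recurrence for $q_n$ into a closed relation among the $s_n$. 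I would then substitute the ansatz $s_n = a_n(n^2 + 7n + 6) - 2(n+1)$ and show it converts the recurrence for $s_n$ into a clean recurrence for $a_n$, one that manifestly converges; the polynomial $n^2+7n+6 = (n+1)(n+6)$ is presumably chosen precisely so the leading-order and first-order terms telescope, leaving $a_n$ as a rapidly converging sequence with limit $a$. Passing to the limit in the relation between $P(P_n)$, $Q(P_n)$ and $s_n/n^2$-type quantities should then give $\lim P(P_n) = \lim Q(P_n) = 2a$.

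The optimality claim requires comparing $X$'s payoff from guessing $2$ against the payoff from an arbitrary guess $m$, which splits the path into subpaths of sizes $m-1$ and $n-m$ with $R$ to move. I would write $X$'s winning probability from guess $m$ as $\tfrac{1}{n} + \tfrac{1}{n}\big[(m-1)\,(\text{continuation on the left block}) + (n-m)\,(\text{continuation on the right block})\big]$ suitably interpreted, and argue that this expression is maximised at $m=2$ (equivalently $m=n-1$). This likely reduces to showing a monotonicity or convexity property of the $q_i$ sequence — roughly, that it is advantageous to keep the surviving path as large as possible so that $R$ has more opportunities to err — and I would prove the requisite inequality by induction alongside the recurrence analysis above.

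The hard part will be the passage to the limit and the justification that $a = \lim a_n$ genuinely exists and that the limiting manipulations are valid: the recurrence for $a_n$ must be shown to be contractive or otherwise convergent, and one must rule out that the limiting winning probabilities pick up boundary contributions that do not vanish as $n\to\infty$. A secondary difficulty is the optimality argument, since proving that $m=2$ beats every intermediate $m$ may require a fairly delicate inductive inequality on the $q_i$ rather than a one-line convexity statement; the acknowledgement that $a$ is apparently not algebraic suggests the sequence $a_n$ has no nice closed form, so all quantitative control must come from the structure of the recurrence itself rather than from an explicit solution.
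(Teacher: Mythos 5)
Your plan for the limiting-probability half follows the paper's route (Propositions 3.1--3.3 and 3.6): derive a recurrence for $s_n$, verify the quadratic ansatz, prove convergence of $(a_n)$ via a contraction estimate, and pass to the limit. But note one concrete error in your set-up: after $X$ guesses $2$, the surviving block is $\{3,\dots,n\}$, which has $n-2$ vertices, and with probability $\frac{1}{n}$ the mine is at $2$ and $X$ loses outright; the correct relation is $p_n = \frac{1}{n} + \frac{n-2}{n}q_{n-2}$, not a relation through $q_{n-1}$. With your index the ansatz $s_n = a_n(n^2+7n+6)-2(n+1)$ would not satisfy the resulting recurrence (the paper's recurrence is $s_n = s_{n-1} + \frac{2}{n-2}s_{n-3} + 2$, the lag-$3$ term coming precisely from $q_{n-2}$), so this must be repaired before anything downstream works.

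The genuine gap is in the optimality half. You propose to reduce the comparison of an arbitrary guess against $2$ to "a monotonicity or convexity property of the $q_i$," justified by the heuristic that $X$ should keep the surviving path as large as possible so that $R$ has more chances to err. Both parts fail. First, $(q_n)$ is not monotone: $q_1 = 1$, $q_4 = 0.625$, $q_5 = 0.6$ and $q_8$ are anomalously large, the sequence is increasing only from $n=9$ onward, and even then $q_7$ sits interleaved between $q_{22}$ and $q_{23}$; the paper needs computer-verified base cases for $9 \le n \le 23$ (Proposition 3.4 and the appendix) plus this exact rank ordering to cut the candidate guesses down to $x \in \{1,2,5,6,9\}$. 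Second, and more seriously, your heuristic argues for $x=1$, not $x=2$: guessing the endpoint keeps a block of size $n-1$, strictly larger than the size-$(n-2)$ block left by guessing $2$. The reason $x=2$ wins is the opposite effect --- it banks the guaranteed win $q_1 = 1$ on the size-one side --- and one must then show that this gain outweighs the loss from shrinking the big block, i.e.
$$F(2) - F(1) = 1 - 2\left( \frac{s_{k-1}}{k} - \frac{s_{k-2}}{k-1} \right) > 0.$$
That is not a soft monotonicity statement: the paper proves it by approximating $s_j$ by $A_j = a(j^2+7j+6)-2(j+1)$ and controlling the error with the $b_k$ contraction bounds of Proposition 3.3, again by induction anchored at computed base cases. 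Without this quantitative comparison of $F(2)$ against $F(1)$ (and against $F(5)$, $F(6)$, $F(9)$, which exploit the large values $q_4$, $q_5$, $q_8$), the optimality claim --- and hence the theorem --- remains unproved.
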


We will prove this in section 3, by calculating the limiting wining probability under our assumption about $X$'s strategy, and then prove by induction that this is indeed their best strategy.

When we consider stars in section $4,$ we will also prove that stars are the best possible tree structure on $n$ vertices for the exploitative player, as described by Theorem 1.3. This result is the mis\`{e}re game's equivalent to Theorem 4.10 of \cite{treesearch}.

\begin{theorem} For any tree $T$ with $n$ vertices,
$$P(T) \leq P(S_n) \text{ and } Q(T) \leq Q(S_n).$$
\end{theorem}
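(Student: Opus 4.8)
The plan is to reduce the theorem to a statement about the two natural one-step recursions for $P$ and $Q$, and then to prove both inequalities simultaneously by strong induction on the number of vertices $n$. First I would set up the recursions. Writing $C_1^v,\dots,C_d^v$ for the connected components of $T$ with a vertex $v$ deleted, where $d=\deg(v)$ and $\sum_i|C_i^v|=n-1$, and using that the mine is uniform and independent of the guesses, conditioning on the first move gives
\[
P(T)=\max_v\frac{1}{n}\sum_i |C_i^v|\,Q(C_i^v),\qquad Q(T)=\frac{1}{n}+\frac{1}{n^2}\sum_v\sum_i |C_i^v|\,P(C_i^v).
\]
The first identity records that $X$ moves first and then $R$ inherits the move on the revealed component, so $X$ plays second there and contributes a factor $Q$; the second records that $R$ opens with a uniform guess, wins outright for $X$ with probability $1/n$ by striking the mine, and otherwise hands $X$ the move on the revealed component, contributing a factor $P$.

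Next I would pin down the star values $P(S_n)=(n-1)/n$ and $Q(S_n)=1-2(n-1)/n^2$, obtained by feeding the two natural moves into these recursions: deleting the centre produces $n-1$ singletons, while deleting a leaf produces a copy of $S_{n-1}$, and one uses the base values $P(S_1)=0$, $Q(S_1)=1$. The $P$-inequality then needs essentially nothing beyond the fact that a winning probability is at most $1$: since $Q(C_i^v)\le 1$, the first recursion yields $P(T)\le\frac1n\sum_i|C_i^v|=(n-1)/n=P(S_n)$ for every $v$, hence for the maximising one.

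The content is in the $Q$-inequality. Here I would invoke the inductive hypothesis on each component, each of which has fewer than $n$ vertices, to get $P(C_i^v)\le P(S_{|C_i^v|})=(|C_i^v|-1)/|C_i^v|$, so that $|C_i^v|\,P(C_i^v)\le |C_i^v|-1$. Summing over the components of a fixed $v$ gives at most $(n-1)-\deg(v)$, and summing over all $v$ and applying the handshake identity $\sum_v\deg(v)=2(n-1)$ (a tree has $n-1$ edges) gives exactly $\sum_v\sum_i |C_i^v|\,P(C_i^v)\le n(n-1)-2(n-1)=(n-1)(n-2)$, which is precisely the quantity realised by $S_n$. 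Substituting back yields $Q(T)\le Q(S_n)$ and closes the induction, with base case $n=1$ trivial since the only tree is $S_1$.

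The main obstacle is the $Q$-bound, because it is tight and therefore tolerates no slack: the books only balance when the per-component estimate $|C|\,P(C)\le|C|-1$, which is where the inductive hypothesis $P(C)\le P(S_{|C|})$ is genuinely used, is combined with the global edge count. I would take care to verify that equality throughout this chain forces every component to be a star—singletons when $v$ is deleted as a centre and $S_{n-1}$ when $v$ is a leaf—so that $S_n$ is the unique extremiser, and to check the degenerate components of size one, for which $P(S_1)=0$ keeps the estimate $|C|\,P(C)\le|C|-1$ valid.
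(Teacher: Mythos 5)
Your proposal is correct and follows essentially the same route as the paper: the same conditioning identity for $Q(T)$, the same universal bound $P(T)\le\frac{n-1}{n}$ (the paper gets it by noting $X$ dies with probability $1/n$ on the first guess, you via $Q\le 1$ in the recursion), and the same component-size count combined with the handshake identity $\sum_v \deg(v)=2(n-1)$. The only cosmetic difference is that you wrap the argument in a strong induction, which is harmless but unnecessary, since the $P$-bound holds for all trees directly and can be fed into the $Q$-estimate without any inductive hypothesis, exactly as the paper does.
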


\section{Two optimal players on a path graph}

We will prove Theorem 1.1 by strong induction on four base cases $n \leq 4,$ with four families of cases emerging in the inductive step.

\begin{proof} [Proof of Theorem 1.1.] First, we will check the base cases $n = 1,2,3,4,$ and then we will show that if the theorem is true for all numbers up to and equal to $m,$ for any $m \geq 4,$ then it is also true for $m+1.$ This will confirm that the theorem is true for all $n.$

Let $P(n)$ be the proposition that the conditions of Theorem 1.1 apply for $n.$ In our base cases, we will establish that $P(n)$ is true for all $n \leq 4.$

\textbf{Base cases.} When $n=1,$ player 1's probability of winning is $0$ (and all strategies are equal in strength, as there is only one choice) as required. When $n=2,$ player 1 will win half the time (when they avoid guessing the mine), and so their winning probability is $1/2$ as required. The strategy is also as required with $k_1 = k_2 = 0$; any move would split the game into ranges of sizes $0$ and $1.$

When $n=3,$ if player 1 guesses $2,$ they will win in the cases that the mine is $1$ or $3,$ as this will be the only candidate left for player 2 to guess; they will lose only when the mine is $2.$ Hence, they win with a probability of $\frac{2}{3}$ as required. Guessing $1$ or $3$ would be worse, and would give player 1 a winning probability of $\frac{1}{3}$ only. The correct strategy is as required with  $k_1 = k_2 = 0;$ the best move splits the game into two smaller ranges of size $1.$

In the $n=4$ case, there are two essentially distinct options for player 1 - guessing an extreme number ($1$ or $4$) or a middling number ($2$ or $3$). If they choose to guess an extreme number, then, assuming they do not lose immediately by guessing the mine (probability $3/4$), they will leave player $2$ with a range of three numbers to guess from, with certainty. Player 2 will use the strategy from the $n=3$ case of guessing the middle number, and player 1 win with a probability of $1/3$ from here. Overall, player 1 will win with a probability of $\frac{3}{4} \cdot \frac{1}{3} = \frac{1}{4}$ only.

Instead, player 1 does better by choosing a middling number. If they choose $2$ for example, there are three possible outcomes:
\begin{enumerate}
\item They lose immediately by guessing the mine (probability $\frac{1}{4}$);
\item The mine is $1,$ and player 1 wins with certainty, leaving only the number $1$ for player 2 to guess (probability $\frac{1}{4}$);
\item The mine is greater than $2$ (this case happens with probability $\frac{1}{2}$). The game reduces to an $n=2$ case, and both players win from here with probability $\frac{1}{2}$  .
\end{enumerate}

The total probability of a win for player 1 is $\frac{1}{4} + \frac{1}{2} \cdot \frac{1}{2} = \frac{1}{2}$ (summing over wins from cases 2 and 3 in the above list). This is the best strategy possible for player 1, and matches the described strategy with $k_1 = k_2 = 0$, splitting the game into ranges of sizes $1$ and $2$.

\textbf{Inductive step.} We need to show that knowing that $P(n)$ is true for all $n \leq m$ is sufficient to prove that $P(m+1)$ is also true. Each of the four cases $n = 4k, n = 4k+1, n = 4k+2, n = 4k+3$ can be dealt with separately, with similar methodology for each, by examining the different ways that player 1 can choose how many numbers lie either side of their guess, splitting the range of numbers into two smaller ranges. The sizes of these smaller ranges, modulo $4,$ will be the relevant factor. We will detail the proof for the case where $n = 4k + 3$ for some integer $k$ below.

When $n = 4k + 3,$ player $1$ has three choices about the sizes mod $4.$ In the case where they split the game into sizes $4k_1$ and $4k_2+2$, with $k_1 + k_2 = k,$ the relevant outcomes are:

\begin{enumerate}
\item The game is reduced to a range of size $4k_1,$ which happens with probability $\frac{4k_1}{4k + 3}.$ By the inductive hypothesis \eqref{perfect_1}, the probability that each player will win from here is $\frac{1}{2}.$
\item The game is reduced to a range of size $4k_2 + 2,$ which happens with probability $\frac{4k_2 + 2}{4k + 3}.$ By the inductive hypothesis \eqref{perfect_1}, the probability that each player will win from here is $\frac{1}{2}.$
\end{enumerate}

The probability that player $1$ wins is given by $\frac{1}{2} \cdot \frac{4k_1 + 4k_2 + 2}{4k+3} = \frac{2k + 1}{4k + 3}.$

If instead, player $1$ divides the game into sizes $4k_1 + 1$ and $4k_2 + 1,$ with $k_1 + k_2 = k,$ the relevant outcomes are:

\begin{enumerate}
\item The game is reduced to a range of size $4k_1 + 1,$ which happens with probability $\frac{4k_1 + 1}{4k + 3}.$ By the inductive hypothesis \eqref{perfect_2}, the probability that player $1$ will win from here is $\frac{2k_1 + 1}{4k_1 + 1}.$
\item The game is reduced to a range of size $4k_2 + 1,$ which happens with probability $\frac{4k_2 + 1}{4k + 3}.$ By the inductive hypothesis \eqref{perfect_2}, the probability that player $1$ will win from here is $\frac{2k_2 + 1}{4k_2 + 1}.$
\end{enumerate}

The probability that player $1$ wins is given by $\frac{4k_1 + 1}{4k + 3} \cdot \frac{2k_1 + 1}{4k_1 + 1} + \frac{4k_2 + 1}{4k + 3} \cdot \frac{2k_2 + 1}{4k_2 + 1} = .\frac{2(k_1 + k_2) + 2}{4k + 3} = \frac{2k + 2}{4k + 3}.$

If instead, player $1$ divides the game into sizes $4k_1 + 3$ and $4k_2 + 3,$ with $k_1 + k_2 = k-1,$ the relevant outcomes are:

\begin{enumerate}
\item The game is reduced to a range of size $4k_1 + 3,$ which happens with probability $\frac{4k_1 + 3}{4k + 3}.$ By the inductive hypothesis \eqref{perfect_3}, the probability that player $1$ will win from here is $\frac{2k_1 + 1}{4k_1 + 3}.$
\item The game is reduced to a range of size $4k_2 + 3,$ which happens with probability $\frac{4k_2 + 3}{4k + 3}.$ By the inductive hypothesis \eqref{perfect_3}, the probability that player $1$ will win from here is $\frac{2k_2 + 1}{4k_2 + 3}.$
\end{enumerate}

The probability that player $1$ wins is given by $\frac{4k_1 + 3}{4k + 3} \cdot \frac{2k_1 + 1}{4k_1 + 3} + \frac{4k_2 + 3}{4k + 3} \cdot \frac{2k_2 + 1}{4k_2 + 3} = \frac{2(k_1 + k_2) + 2}{4k+3} = \frac{2k - 2}{4k + 3}.$

The strategy with the best outcome for player $1$ is as described in Theorem 1.1, and wins with probability $\frac{2k + 2}{4k + 3},$ as required. The proofs for the other cases, $n = 4k, 4k+1, 4k+2,$ are very similar, following the same method.

This concludes the inductive step; $P(n)$ is true for all positive integers $n.$

\end{proof}

\section{Random player versus exploitative player on a path graph}

We now move on to consider other strategy profiles where the players are non-optimal; specifically, we will focus on the consequences of one player making all their guesses uniformly at random, who we name $R,$ or the random player. The other player, named $X,$ or the exploitative player, knows this and will play for the highest winrate with this in mind. We will prove our main result, Theorem 1.2, which describes an optimal strategy for $X$ and their resulting winrate as $n \to \infty.$

We will start with a suggested strategy for the exploitative player, which is to always guess the number $2$ (or $n-1$, which is strategically equivalent due to the symmetry of the game). Later on, we will be proving that this strategy is optimal by induction. Recall that we define $p_n$ to be the probability that $X$ wins when they play first in $MS(P_n)$ and uses this strategy. We will also define $q_n$ to be the probability that $X$ wins in $MS(P_n)$ with this strategy, but when the random player $R$ is playing first. We also define a sum function, $s_n,$ by $s_n = \sum_{i=1}^n ip_i.$ We have initial conditions on $s_n,$ by analysis of small cases of $MS(P_n).$ Firstly, see that $X$ will always lose $MS(P_1)$ if they play first, so $p_1 = 0$ and $s_1 = 0.$ If $X$ plays first in $MS(P_2),$ they must try to randomly avoid guessing the mine, and they succeed half of the time, so $p_2 = \frac{1}{2}$ and $s_2 = 2p_2 = 1.$ In $MS(P_3),$ if $X$ is playing first, they achieve a winrate of $\frac{2}{3}$ by guessing $2,$ so $p_3 = \frac{2}{3}$ and $s_3 = 2p_2 + 3p_3 = 3.$

We will find a recurrence relation for the higher values of $s_n,$ and use it to find estimates for the values in the sequences $(p_n)$ and $(q_n),$ with a suitable bounded error, to show that these sequences are in fact increasing as long as $n \geq 9.$ From there, we will confirm by induction that our proposed strategy for $X$ is indeed the correct strategy for them, and the limits of the sequences $(p_n)$ and $(q_n).$ 

\subsection{Results}

\begin{proposition} We have the recurrence 
\begin{equation}
	\label{recurrence}
	s_n = s_{n-1} + \frac{2}{n-2} s_{n-3} + 2.
\end{equation}
\end{proposition}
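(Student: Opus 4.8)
The plan is to first establish two auxiliary one-step relations, one for $p_n$ and one for $q_n$, each obtained by conditioning on the location of the mine, and then to combine them and difference the partial sums $s_n$ to extract the stated recurrence.

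First I would analyse $p_n$ directly. When $X$ moves first and guesses $2$, the mine (uniform on $\{1,\dots,n\}$) falls into three cases: it sits at position $2$ (probability $1/n$, and $X$ loses at once); it sits at position $1$ (probability $1/n$, after which the surviving component is the single vertex $\{1\}$, forcing $R$ to guess the mine on the next turn, so $X$ wins); or it lies in $\{3,\dots,n\}$ (probability $(n-2)/n$, after which play continues on a path of $n-2$ vertices with $R$ now to move). Summing these contributions gives
\begin{equation*}
p_n = \frac{1}{n} + \frac{n-2}{n}\, q_{n-2}.
\end{equation*}

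Next I would compute $q_n$ by conditioning on $R$'s uniformly random first guess $j$ together with the mine's position. For fixed $j$ the mine is at $j$ (probability $1/n$, so $R$ loses and $X$ wins), to the left of $j$ (play passes to $X$ on a path of $j-1$ vertices), or to the right (play passes to $X$ on a path of $n-j$ vertices); the key point is that the roles now flip, so it is $p$ rather than $q$ that appears. Averaging over $j$ and reindexing with $i=j-1$ in one sum and $i=n-j$ in the other, both inner sums collapse to $\sum_{i=1}^{n-1} i\,p_i = s_{n-1}$, yielding
\begin{equation*}
q_n = \frac{1}{n} + \frac{2}{n^2}\, s_{n-1}.
\end{equation*}

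Finally I would substitute the expression for $q_{n-2}$ into the formula for $p_n$ to get $p_n = \tfrac{2}{n} + \tfrac{2}{n(n-2)} s_{n-3}$, and then apply the telescoping identity $s_n - s_{n-1} = n\,p_n$ to conclude $s_n = s_{n-1} + 2 + \tfrac{2}{n-2}\, s_{n-3}$, which is precisely the claimed recurrence. The algebra and the telescoping step are mechanical; the one delicate point is the double-sum manipulation for $q_n$, where the reindexing and the left/right symmetry must be handled correctly, and where one must be sure that after $R$'s move it is indeed $X$ who plays "first" in the residual game, so that $p$ (not $q$) is the right quantity to insert. I would also record the range of validity, since the $q_{n-2}$ step needs $n-2\geq 1$, consistent with the stated base data $s_1,s_2,s_3$, so that the recurrence is invoked only for $n$ large enough.
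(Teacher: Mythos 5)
Your proposal is correct and follows essentially the same route as the paper's own proof: the relation $p_n = \tfrac{1}{n} + \tfrac{n-2}{n}q_{n-2}$, the conditioning over $R$'s uniform guess that collapses to $q_n = \tfrac{1}{n} + \tfrac{2}{n^2}s_{n-1}$, the substitution giving $p_n = \tfrac{2}{n} + \tfrac{2}{n(n-2)}s_{n-3}$, and the telescoping step $s_n - s_{n-1} = n p_n$. The only cosmetic difference is that you state the closed form for $q_n$ in general before specialising to $q_{n-2}$, whereas the paper specialises the sum directly; the content is identical.
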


\begin{proposition} For any real value of $a^*,$ the expression $S(n) = a^*n^2 + (7a^* - 2)n + (6a^* - 2)$ is a quadratic solution of \eqref{recurrence}.
\end{proposition}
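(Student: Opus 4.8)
The plan is to verify directly, by substitution, that $S$ satisfies \eqref{recurrence}. Rewriting the recurrence, it suffices to show that
\[ S(n) - S(n-1) - 2 = \frac{2}{n-2}\,S(n-3) \]
holds for all admissible $n$ (that is, $n \geq 4$, so that $n - 2 \neq 0$ and all indices are in range). Since $S(n)$ is linear in the parameter $a^*$ and I would establish the identity as an equality of rational functions of $n$ with $a^*$ carried along symbolically, proving it once suffices to cover every real value of $a^*$ simultaneously.

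First I would compute the left-hand side. The first difference of a quadratic is linear, so $S(n) - S(n-1)$ is a linear polynomial in $n$; a short calculation gives $S(n) - S(n-1) = 2a^* n + 6a^* - 2$, whence
\[ S(n) - S(n-1) - 2 = 2a^* n + 6a^* - 4 = 2\bigl(a^*(n+3) - 2\bigr). \]

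For the right-hand side, the key observation — and what makes the whole argument clean — is that $S$ factors. Writing $m^2 + 7m + 6 = (m+1)(m+6)$ gives
\[ S(m) = a^*(m+1)(m+6) - 2(m+1) = (m+1)\bigl(a^*(m+6) - 2\bigr). \]
Substituting $m = n-3$ then yields $S(n-3) = (n-2)\bigl(a^*(n+3) - 2\bigr)$, so the troublesome factor $(n-2)$ cancels against the denominator and $\frac{2}{n-2}\,S(n-3) = 2\bigl(a^*(n+3) - 2\bigr)$, which matches the left-hand side exactly.

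I expect the main (and only real) obstacle to be purely organisational: recognising the factorisation $S(m) = (m+1)\bigl(a^*(m+6)-2\bigr)$ up front, so that the division by $n-2$ becomes transparent rather than requiring polynomial long division and a check that the remainder vanishes. The divisibility of $S(n-3)$ by $(n-2)$ is exactly the structural reason a quadratic solution survives the $\frac{2}{n-2}$ coefficient, and it is the same factorisation underlying the definition $s_n = a_n(n^2 + 7n + 6) - 2(n+1)$ given earlier. Beyond this, one should note the range of validity ($n \geq 4$) and observe that, since the resulting identity is affine in $a^*$, it holds for all real $a^*$, giving the full one-parameter family of quadratic solutions claimed.
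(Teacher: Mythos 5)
Your proof is correct, and it verifies the same thing the paper verifies, but by a cleaner route. The paper's proof substitutes $S^*(n-1)$ and $S^*(n-3)$ into the recurrence, expands everything, and then compares coefficients of the $n^2$, $n$, constant and $\frac{1}{n-2}$ terms on both sides — in particular, the fact that the $\frac{2}{n-2}S(n-3)$ term contributes no genuine $\frac{1}{n-2}$ part (i.e.\ that $(n-2)$ divides $S(n-3)$) is left implicit in the phrase ``the values match.'' You instead isolate that term by rewriting the recurrence as $S(n)-S(n-1)-2 = \frac{2}{n-2}S(n-3)$, compute the linear first difference $S(n)-S(n-1) = 2a^*n+6a^*-2$, and make the divisibility explicit through the factorisation $S(m) = (m+1)\bigl(a^*(m+6)-2\bigr)$, so that $S(n-3) = (n-2)\bigl(a^*(n+3)-2\bigr)$ and the cancellation is transparent. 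What your version buys is an explanation of \emph{why} a quadratic survives the $\frac{2}{n-2}$ coefficient — the root of $m^2+7m+6$ at $m=-1$ shifts to exactly cancel the pole — and it connects directly to the form $s_n = a_n(n^2+7n+6)-2(n+1)$ used throughout Section 3; what the paper's brute-force expansion buys is only brevity of statement, at the cost of hiding the structural reason. Both arguments are complete verifications, and your remarks about the admissible range $n \geq 4$ and the affine dependence on $a^*$ are sound (the latter is strictly speaking unnecessary, since the identity is proved with $a^*$ symbolic).
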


Let $a_k$ be defined such that $s_k = a_kn^2 + (7a_k - 2)n + (6a_k - 2)  = a_k(k^2 + 7k + 6) - 2(k+1).$ Then, let $b_k$ be the maximum absolute difference between any two of $a_k, a_{k+1}$ and $a_{k+2}.$ We will see in the proof of Proposition 3.3 that $(b_k)$ is a (non-strictly) decreasing sequence, and that all the values of $a_i$ for $i > k+2$ will lie between the smallest and largest of $a_k, a_{k+1}$ and $a_{k+2}.$ Thus, we will find that the sequence $(a_k)$ will have a limit $a,$ and the values of $a_k$ will function as estimates for $a,$ with $b_k$ being an upper bound for all $|a - a_i|$ with $i > k+2.$ Proposition 3.3 also describes the rapidly decreasing nature of the sequence $(b_k);$ at each two steps the value decreases in the order of $\frac{1}{k},$ leading to long-term decreasing of the order $\frac{1}{(2k)!}$ over time.

\begin{proposition} We have that $b_{k-1} \leq \frac{4}{k+1}b_{k-3},$ and that a limit exists for the sequence $(a_k).$ It follows that

\begin{align*}
b_k \leq
\left\{
    \begin {aligned}
         & \frac{2^l}{21(l+1)!} \quad & k=2l \\
         & \frac{ \frac{15}{252} 4^m  }{(2m+3)!!} & k = 2m+1            
    \end{aligned}
\right.
\end{align*}

where $x!!$ is the double factorial function (the product of $x$ and only every other positive integer less than $x;$ the positive integers with the same parity as $x.$)
\end{proposition}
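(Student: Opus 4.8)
The plan is to convert the given recurrence for $s_n$ into a recurrence for the sequence $(a_k)$ itself, and then to show that this recurrence expresses each $a_n$ as a weighted average of two earlier terms. First I would substitute $s_m = a_m(m^2+7m+6)-2(m+1)$ into \eqref{recurrence} for $m=n,n-1,n-3$. Because Proposition 3.2 guarantees that $a^*(m^2+7m+6)-2(m+1)$ solves \eqref{recurrence} for every fixed $a^*$, all the terms not involving the $a$'s cancel, and after factoring the quadratics as $m^2+7m+6=(m+1)(m+6)$, $m^2+5m=m(m+5)$ and $m^2+m-6=(m+3)(m-2)$, one is left with $a_n(n+1)(n+6)=a_{n-1}\,n(n+5)+2a_{n-3}(n+3)$. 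The key observation, and the engine of the whole argument, is that the two coefficients on the right sum to $(n+1)(n+6)$, so that $a_n=w_n a_{n-1}+(1-w_n)a_{n-3}$ with $w_n=\tfrac{n(n+5)}{(n+1)(n+6)}\in(0,1)$ and $1-w_n=\tfrac{2(n+3)}{(n+1)(n+6)}$. Thus every $a_n$ is a genuine convex combination of $a_{n-1}$ and $a_{n-3}$, hence lies between them; this holds for $n\ge4$, where \eqref{recurrence} is valid.

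From here I would first record the nesting claim made just before the proposition: writing $I_k$ for the closed interval spanned by $a_k,a_{k+1},a_{k+2}$, the term $a_{k+3}=w_{k+3}a_{k+2}+(1-w_{k+3})a_k$ lies in $I_k$, and inductively every later $a_i$ does too, so $I_{k+1}\subseteq I_k$ and $(b_k)$, the length of $I_k$, is non-increasing. The heart of the proposition is the quantitative contraction $b_{k-1}\le\frac{4}{k+1}b_{k-3}$. To obtain it I would express the new triple $a_{k-1},a_k,a_{k+1}$ through the old triple $a_{k-3},a_{k-2},a_{k-1}$: one has $a_k-a_{k-1}=(1-w_k)(a_{k-3}-a_{k-1})$ and $a_{k+1}-a_k=(1-w_{k+1})(a_{k-2}-a_k)$, while $a_k\in I_{k-3}$ by the convex-combination property. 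Bounding each factor $|a_{k-3}-a_{k-1}|$ and $|a_{k-2}-a_k|$ by $b_{k-3}$, and handling the remaining pair $a_{k+1}-a_{k-1}$ by the triangle inequality, gives $b_{k-1}\le\big[(1-w_k)+(1-w_{k+1})\big]b_{k-3}$. I expect the main (though still elementary) obstacle to be the final inequality $1-w_n=\tfrac{2(n+3)}{(n+1)(n+6)}\le\tfrac{2}{k+1}$ for $n=k$ and $n=k+1$, which for $n=k$ reduces to $\tfrac{k+3}{k+6}<1$ and for $n=k+1$ to $(k+4)(k+1)\le(k+2)(k+7)$; adding the two bounds yields the factor $\frac{4}{k+1}$. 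Some care is needed to track that this requires $k\ge4$ so that the $a_n$ recurrence is available for both $a_k$ and $a_{k+1}$.

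Finally I would deduce the limit and the explicit bounds. Since $\frac{4}{k+1}<1$ once $k\ge4$, the contraction forces $b_k\to0$; the nested, shrinking intervals $I_k$ then intersect in a single point, which is the limit $a=\lim a_k$, and $b_k$ bounds $|a-a_i|$ for all $i\ge k$. For the closed forms I would rewrite the contraction as $b_m\le\frac{4}{m+2}b_{m-2}$ and iterate separately along even and odd indices. On even indices $m=2l$ this reads $b_{2l}\le\frac{2}{l+1}b_{2(l-1)}$, and a straightforward induction from the directly-checked base $b_2=\frac{1}{72}\le\frac{1}{21}$ produces $b_{2l}\le\frac{2^l}{21(l+1)!}$. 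On odd indices $m=2l+1$ it reads $b_{2l+1}\le\frac{4}{2l+3}b_{2l-1}$, and since $(2l+3)!!=(2l+3)(2l+1)!!$ the same induction, started from the exact value $b_1=a_3-a_1=\frac{11}{36}-\frac{2}{7}=\frac{5}{252}$ (which makes the stated constant $\frac{15}{252}=3b_1$ tight at $l=0$), yields $b_{2l+1}\le\frac{(15/252)4^l}{(2l+3)!!}$. The only genuinely new content beyond the contraction is verifying these two base cases, for which one needs the small values $a_1,\dots,a_4$ computed from the initial data $s_1=0,\,s_2=1,\,s_3=3$ via \eqref{recurrence}.
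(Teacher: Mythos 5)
Your proposal is correct and follows essentially the same route as the paper: the same convex-combination recurrence $a_k = w_k a_{k-1} + (1-w_k)a_{k-3}$ with weights summing to one, the same nested-interval/monotonicity observation for $(b_k)$, the same triangle-inequality contraction $b_{k-1} \leq \frac{4}{k+1}b_{k-3}$, and the same even/odd iteration from the small initial values. The only cosmetic differences are that you bound each weight $1-w_n \leq \frac{2}{k+1}$ directly where the paper uses a partial-fraction estimate, and that you start the even-index iteration at the base $b_2$ rather than $b_0$, so for completeness you should note that the $k=0$ case of the stated bound holds with equality, $b_0 = \frac{1}{21}$.
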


\begin{proposition} For $n \geq 9,$ $p_n$ and $q_n$ are both increasing.
\end{proposition}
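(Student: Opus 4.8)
The plan is to turn both claims into statements about how fast the correction terms $a_k - a$ shrink, so that the super-exponential decay supplied by Proposition 3.3 does all the work.

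First I would record closed forms for $p_n$ and $q_n$ in terms of the $a_k$. Since $s_n - s_{n-1} = np_n$, the recurrence \eqref{recurrence} gives $p_n = \frac{1}{n}\left(\frac{2s_{n-3}}{n-2} + 2\right)$; substituting $s_{n-3} = a_{n-3}(n-2)(n+3) - 2(n-2)$ (the definition of $a_k$ at $k=n-3$, using $(n-3)^2 + 7(n-3)+6 = (n-2)(n+3)$) collapses this to $p_n = 2a_{n-3} + \frac{6a_{n-3}-2}{n}$. For $q_n$ I would first condition on $R$'s uniform first guess $i$: the game either ends immediately (probability $1/n$) or reduces to a path of size $i-1$ or $n-i$ on which $X$ moves first, so that $\Pr(X \text{ wins} \mid i) = \frac1n + \frac{i-1}{n}p_{i-1} + \frac{n-i}{n}p_{n-i}$. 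Averaging over $i$ and noting that both tail sums collapse to $s_{n-1}$ yields $q_n = \frac{1}{n} + \frac{2s_{n-1}}{n^2}$, and substituting $s_{n-1} = a_{n-1}\,n(n+5) - 2n$ gives $q_n = 2a_{n-1} + \frac{10a_{n-1}-3}{n}$.

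Next I would compute the forward differences and split each into a drift term and an oscillation term:
$$p_{n+1}-p_n = (a_{n-2}-a_{n-3})\Bigl(2+\tfrac{6}{n+1}\Bigr) + \frac{2-6a_{n-3}}{n(n+1)},$$
$$q_{n+1}-q_n = (a_n-a_{n-1})\Bigl(2+\tfrac{10}{n+1}\Bigr) + \frac{3-10a_{n-1}}{n(n+1)}.$$
Because the $a_k$ converge to $a\approx 0.2994$ and, by Proposition 3.3, every later value lies between the extremes of a computed triple, one checks $a_{n-3}<\tfrac13$ and $a_{n-1}<\tfrac{3}{10}$ throughout the relevant range, so both drift terms are positive. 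The oscillation terms are bounded in absolute value by $b_{n-3}\bigl(2+\tfrac{6}{n+1}\bigr)$ and $b_{n-1}\bigl(2+\tfrac{10}{n+1}\bigr)$ respectively, since the two $a$'s being differenced always lie in one of the triples governed by $b$. I would then invoke the bounds of Proposition 3.3 to show that for $n$ past a small threshold the oscillation is dwarfed by the drift, and verify the finitely many remaining cases near $n=9$ by evaluating the $a_k$ directly.

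The hard part will be the $q_n$ inequality. Here the drift numerator $3-10a_{n-1}$ is only about $0.006$, giving a drift of order $0.006/n^2$, whereas for $p_n$ the numerator $2-6a_{n-3}\approx 0.20$ leaves a comfortable margin. Consequently the crude worst-case estimate ``oscillation $\le 2b_{n-1}$'' is not by itself small enough at $n=9$ (indeed $b_8$ is of the same order as the drift), so the argument must either use sharper numerical values of $a_k$ and $b_k$ for the first several $n$ --- exploiting the actual sign of $a_n - a_{n-1}$ rather than only $|a_n-a_{n-1}|\le b_{n-1}$ --- or push the threshold far enough out (around $n\approx 12$) that the double-factorial decay of $b_k$ finally overwhelms the $1/n^2$ drift, checking $9 \le n < 12$ by hand. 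Taking $n\ge 9$ is exactly what is needed to avoid the genuine dips of $p_n$ and $q_n$ at $n=7,8$, so the base of the direct check cannot be lowered.
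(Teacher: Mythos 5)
Your overall strategy is the same as the paper's --- identify a positive drift of order $1/n^2$ (with numerators $2-6a$ and $3-10a$) and show it eventually dominates an oscillation term controlled by the $b_k$ of Proposition 3.3, with the finitely many small cases checked numerically --- but your bookkeeping is genuinely cleaner. The paper approximates $s_k$ by $A_k = a(k^2+7k+6)-2(k+1)$ (using the limit $a$ rather than $a_k$), so its error term $b_{k-1}(k^2+7k+6)+b_{k-2}(k^2+5k)$ is of order $b_k k^2$, and it then propagates the domination inequality by a separate induction $P(k)\Rightarrow P(k+2)$ built on $b_{k+1}\le\frac{4}{k+3}b_{k-1}$, starting from numerically verified base cases $k=22,23$ (the proposition itself is checked by computer for $9\le n\le 23$). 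Your exact identities $p_n = 2a_{n-3}+\frac{6a_{n-3}-2}{n}$ and $q_n = 2a_{n-1}+\frac{10a_{n-1}-3}{n}$, both of which are correct, collapse the two correlated errors into the single differences $a_{n-2}-a_{n-3}$ and $a_n-a_{n-1}$, so your oscillation term is only $O(b)$; this is a tidier way to run the same mechanism, and it replaces the paper's auxiliary induction with a direct appeal to the closed-form double-factorial bounds.

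The concrete flaw is your estimate of where the crossover happens. Using only the bounds stated in Proposition 3.3, at $n=12$ the $q$-oscillation is bounded by $b_{11}\bigl(2+\tfrac{10}{13}\bigr) \le \tfrac{(15/252)\,4^5}{13!!}\cdot\tfrac{36}{13} \approx 1.3\times 10^{-3}$, while the drift is about $0.0056/156 \approx 3.6\times 10^{-5}$: the comparison fails by a factor of roughly $35$, not marginally. Tracking the parity of the bounds, the crude estimates do not beat the $q$-drift for every subsequent $n$ until about $n=20$ (they pass at $n=18$, fail again at $n=19$ since $b_{18}\le 2^9/(21\cdot 10!)\approx 6.7\times 10^{-6}$ gives oscillation $\approx 1.7\times10^{-5}$ against drift $\approx 1.4\times 10^{-5}$), and for the $p$-inequality they only take over around $n=16$. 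So the finite verification must cover roughly $9\le n\le 19$ --- essentially the same computational range the paper uses --- not $9\le n<12$. This does not invalidate your approach: you flagged exactly this risk, and your fallback of using actual numerical values of $a_k$ and $b_k$ for the small cases is precisely what the paper's appendix does. But as written, stopping the hand-check at $n=11$ and invoking the Proposition 3.3 bounds from $n=12$ onwards would leave the argument incomplete.
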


\begin{proposition} An optimal strategy for player $X$ is to choose $2$ or $n-1,$ for any $n.$
\end{proposition}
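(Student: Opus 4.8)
The plan is to argue by strong induction on $n$ that guessing $2$ (equivalently $n-1$) is an optimal opening for $X$, and that consequently the quantities $p_n$ and $q_n$ defined from this strategy are the genuine optimal values of $MS(P_n)$ for $X$ as first and second player respectively. The small cases are checked directly, and in the inductive step I assume the claim for all sizes below $n$. The whole point of the induction is that, once $X$ has played any opening move, the surviving component is a strictly smaller path on which $X$ now moves second; by the inductive hypothesis $X$'s best continuation there is worth exactly $q$ of that component's size, so I may compute the value of an arbitrary opening purely in terms of the already-known $q$-values.

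First I would record two formulas. Conditioning on $R$'s uniform opening move and recognising $\sum_{j=1}^{n-1} j p_j = s_{n-1}$ twice gives
\[ q_n = \frac{1}{n} + \frac{2 s_{n-1}}{n^2}, \qquad\text{equivalently}\qquad n q_n = 1 + \frac{2 s_{n-1}}{n}. \]
If instead $X$ opens with a guess leaving components of sizes $u$ and $v = n-1-u$ (so $u=1$ is the guess $2$ and $u=0$ is the guess $1$), then conditioning on which component holds the mine and invoking the inductive hypothesis gives $X$ a win-probability $\Phi(u)/n$, where $\Phi(u) = u q_u + v q_v$ with the convention $0\cdot q_0 = 0$. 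Writing $f(j) = j q_j$ and substituting the representation $s_{j-1} = a_{j-1}\,j(j+5) - 2j$ into the first formula yields the clean closed form $f(j) = 2a_{j-1}(j+5) - 3$ for $j \ge 1$ (and $f(0)=0$, with $a_0 = \tfrac13$). The proposition then reduces to the single statement that $\Phi(u)$ is maximised at $u = 1$.

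After substitution this splits into two families. For an interior split ($1 \le u \le n-2$) the claim becomes
\[ a_{u-1}(u+5) + a_{n-2-u}(n+4-u) \le a_{n-3}(n+3) + 2, \]
and for the end split ($u=0$) it becomes $2a_{n-2}(n+4) - 2a_{n-3}(n+3) \le 1$. The key observation is that if every coefficient $a_\bullet$ is replaced by the limit $a$, these collapse to $6a \le 2$ and $2a \le 1$; since $a < \tfrac13$ (which I would certify rigorously from $a \le a_{k_0} + b_{k_0}$ for a small explicit $k_0$, using Proposition 3.3) both hold with a fixed positive slack, namely $2 - 6a$ and $1 - 2a$.

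The real work, and the step I expect to be the main obstacle, is controlling the replacement error $\sum (a_\bullet - a)\cdot(\text{linear factor})$ uniformly over the split position $u$ and showing it never exceeds the slack $2 - 6a$. The difficulty is a tension between two regimes: for small $u$ the coefficient $a_{u-1}$ has not yet settled near $a$, but it multiplies only the bounded factor $u+5$; for $u$ near $n$ the coefficient $a_{n-2-u}$ deviates from $a$ by a negligible amount but multiplies a factor of size $\sim n$. To close the argument I would use the factorial-type decay of $b_k$ from Proposition 3.3 to dominate the large-factor terms (so that $(a_k-a)\cdot n \to 0$), bound the finitely many small-index contributions by their explicit values, and reduce everything to a check over finitely many $n$ below the threshold where these tail bounds take effect (compatible with the $n \ge 9$ regime of Proposition 3.4), which can be verified directly from the recurrence. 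Assembling these pieces establishes $\Phi(u) \le \Phi(1)$ for all $u$, completing the inductive step and hence the proposition.
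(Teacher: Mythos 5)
Your proposal is correct in its algebra and in overall strategy, but it takes a genuinely different route from the paper at the decisive step. Both proofs share the same skeleton: strong induction, a computer-verified base range, the formula giving the value of an opening that leaves components of sizes $u$ and $v=n-1-u$ as $\frac{1}{n}\bigl(uq_u+vq_v\bigr)$, and reliance on the decay of $(b_k)$ from Proposition 3.3. The paper then uses Proposition 3.4 to establish a rank order of the $q_m$'s ($q_1>q_4>q_5>q_8$ at the top, monotone elsewhere, with $q_7$ slotted between $q_{22}$ and $q_{23}$), which qualitatively eliminates every opening except $x\in\{1,2,5,6,9\}$; the candidates $5,6,9$ fall to a one-line comparison, and only $x=1$ versus $x=2$ needs a quantitative perturbation argument. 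You dispense with the ranking (and hence with Proposition 3.4 entirely): the closed form $jq_j=2a_{j-1}(j+5)-3$, which is correct and is not written down in the paper, lets you treat every interior split uniformly as a perturbation of the limiting inequality $6a\le 2$ with fixed slack $2-6a$, while your end-split inequality $2a_{n-2}(n+4)-2a_{n-3}(n+3)\le 1$ is exactly the paper's $F(2)-F(1)\ge 0$ comparison rewritten in the $a$-coefficients. What your route buys is uniformity and transparency: one inequality covers all openings, and the exact identity $6a_0=2$ explains why $u=1$ is the optimum (the slack is consumed exactly at the optimal split). What it costs is that the entire burden becomes quantitative, and here one caution matters: the explicit bounds of Proposition 3.3 alone are too crude at small indices — for instance for the split $u=9$ the bound $(u+5)b_{u-3}\le 14\cdot\frac{2^{3}}{21\cdot 4!}\approx 0.22$ already exceeds the slack $2-6a\approx 0.203$ — so your planned fallback of bounding the finitely many small-index contributions by their explicit values (the true worst case, at $u=2$, is $7|a_1-a|\approx 0.096$) together with the factorial tail and a finite check in $n$ is not optional but essential to close the argument. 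With that caveat, the plan is sound and would constitute a valid alternative proof.
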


\begin{proposition} The limits as $n \to \infty$ of the sequences $(p_n)$ and $(q_n)$ are both $2a.$
\end{proposition}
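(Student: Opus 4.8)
The plan is to establish that the sequences $(p_n)$ and $(q_n)$ share a common limit equal to $2a,$ where $a = \lim_{k \to \infty} a_k$ is the limit guaranteed to exist by Proposition 3.3. The starting point is the definition $s_n = a_n(n^2 + 7n + 6) - 2(n+1),$ which immediately gives the exact expression $p_n = s_n - s_{n-1}$ (since $s_n = \sum_{i=1}^n i p_i$, we have $n p_n = s_n - s_{n-1}$, so $p_n = (s_n - s_{n-1})/n$). First I would substitute the closed form for $s_n$ and $s_{n-1}$ into this difference, writing $p_n$ explicitly in terms of $a_n$ and $a_{n-1}$; the leading behaviour should be governed by the $a_n n^2$ and $a_{n-1}(n-1)^2$ terms, producing a leading term of order $2a_n n$ in $s_n - s_{n-1}$, and hence $p_n \to 2a$ after dividing by $n$.

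The key estimate is that $|a_n - a|$ is controlled by $b_k$ from Proposition 3.3, which decays faster than any geometric rate (of order $1/(2k)!!$). The plan is to write $a_n = a + \epsilon_n$ with $|\epsilon_n| \leq b_{k}$ for suitable $k$ depending on $n$, substitute into the expression for $p_n$, and show that the error terms involving $\epsilon_n$ and $\epsilon_{n-1}$ vanish in the limit. Because $b_k \to 0$ super-exponentially while the polynomial factors grow only quadratically, products such as $\epsilon_n n^2$ still tend to $0$, so the limiting value of $n p_n$ is exactly the limit of $2a n$ divided through, yielding $\lim_{n \to \infty} p_n = 2a.$ I would track the $n$-dependence carefully to confirm that no polynomial growth outpaces the decay of the error.

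For $(q_n)$, the relationship between $q_n$ and the $p$-sequence must be made explicit. When $R$ plays first in $MS(P_n)$, they guess uniformly at random; conditioning on their guess (and on whether they hit the mine) reduces the game to a collection of smaller instances in which $X$ now plays first, so $q_n$ is expressible as an average of $p$-values (weighted by the induced range sizes), plus the contribution from $R$ losing immediately. I would write this conditioning identity explicitly and then take limits: since each $p_i \to 2a$, a suitable averaging argument (bounding the tail and using that the weights sum appropriately) forces $q_n \to 2a$ as well. The main obstacle will be handling the averaging for $q_n$ rigorously: one must verify that the weighted average of $p_i$ over a growing range of indices converges to the common limit $2a$, which requires controlling the contribution of small-$i$ terms (where $p_i$ may differ substantially from $2a$) and confirming that their total weight vanishes as $n \to \infty.$ Once both limits are pinned to $2a$, combined with Proposition 3.5 (which identifies guessing $2$ or $n-1$ as optimal), this completes the proof that $\lim P(P_n) = \lim Q(P_n) = 2a.$
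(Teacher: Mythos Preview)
Your plan is correct, and for $(p_n)$ it matches the paper's proof exactly: write $p_n = (s_n - s_{n-1})/n$, replace $s_k$ by the quadratic $A_k = a(k^2+7k+6)-2(k+1)$, and use the factorial-type decay of $|a_k - a| \leq b_{k-1}$ from Proposition~3.3 to kill the error terms of order $\epsilon_n n$.

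For $(q_n)$ your route diverges slightly from the paper. You propose to expand $q_n$ as a weighted average of the $p_i$ via the conditioning on $R$'s first move, and then apply a Ces\`aro-type argument to pass the limit through the average. This works, but the paper is more direct: from the proof of Proposition~3.1 it already has the closed form $q_n = \frac{1}{n}\bigl(1 + \frac{2}{n}s_{n-1}\bigr)$, so one simply substitutes $s_{n-1}\approx A_{n-1}$ to get $q_n \approx 2a + \frac{10a-3}{n}$ with the same error control as before. Your averaging argument is equivalent in content (it is exactly this formula unwound), but requires you to redo the tail-versus-bulk splitting that the closed form handles in one line. Either way the conclusion is immediate once $p_n\to 2a$ (or $s_{n-1}/n^2 \to a$) is in hand.
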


In combination, Propositions 3.5 and 3.6 prove Theorem 1.2. By calculating the values of $a_k$ for arbitrarily large $k$ by computer, $a$ can be calculated to any number of decimal places; see the casework in the appendix for a calculation example. The limiting winning probability for $X$ found in Proposition 3.6, given to six decimal places, is $0.598889.$

\subsection{Proofs of Propositions 3.1 to 3.6, and Theorem 1.2}

\begin{proof} [Proof of Proposition 3.1.] When player $X$ guesses $2$ in $MS(P_n),$ they win immediately with probability $\frac{1}{n},$ when the mine is 1, as player $R$ will have to guess $1$ on the next turn. Otherwise, as long as the mine is not 2, the game continues in an interval of width $n-2$ with $R$ to play; this happens with probability $\frac{n-2}{n}$. Hence, $p_n = \frac{1}{n} + \frac{n-2}{n}q_{n-2}$ for $n \geq 2.$

To write an expression for $q_n,$ consider the possibilities when $R$ plays; if they guess a number $i,$ then with probability $\frac{i-1}{n},$ the mine is lower than $i$, and the game continues with width $i-1,$ with $X$ winning with probability $p_{i-1}.$ Similarly, with probability $\frac{n-i}{n},$ the mine is greater than $i,$ and the game continues with width $n-i.$ Finally, $R$ guesses the mine with probability $\frac{1}{n},$ and $X$ wins in that case.

Summing over these outcomes for all choices of $i$ gives that

$$q_n = \frac{1}{n} \sum_{i=1}^{n} \left( \frac{i-1}{n} p_{i-1} + \frac{n-i}{n} p_{n-i} + \frac{1}{n}  \right) $$
for all $n \in \mathbb{N}$.

Then, $$q_{n-2} = \frac{1}{n-2} + \frac{1}{(n-2)^2}  \sum_{i=1}^{n-2} \left( (i-1)  p_{i-1} + (n-2-i) p_{n-2-i}   \right) = \frac{1}{n-2} + \frac{2}{(n-2)^2}  s_{n-3}.$$ Substituting this into $p_n$ gives that $$p_n = \frac{1}{n} + \frac{n-2}{n} \left( \frac{1}{n-2} + \frac{2}{(n-2)^2}  s_{n-3} \right) = \frac{2}{n} + \frac{2}{n(n-2)} s_{n-3} . $$ Multiplying by $n$ gives $n p_n = 2 + \frac{2}{n-2}s_{n-3}.$ Since $s_n - s_{n-1} = n p_n,$ we have $s_n = s_{n-1} + \frac{2}{n-2}s_{n-3} + 2, $ as required.

\end{proof}

\begin{proof} [Proof of Proposition 3.2.] When $S^*(n) = a^*n^2 + (7a^* - 2)n + (6a^* - 2),$ we have that $S^*(n-1) = a^*(n^2 - 2n + 1) + (7a^* - 2)(n - 1) + (6a^* - 2)$ and $S^*(n-3) = a^*(n^2 - 6n + 9) + (7a^* - 2)(n - 3) + (6a^* - 2).$ Substituting all of this into \eqref{recurrence} gives, for the right-hand side:

$$ a^*(n^2 - 2n + 1) + (7a^* - 2)(n - 1) + (6a^* - 2) + \frac{2a^*(n^2-6n+9) + 2(7a^*-2)(n-3) + 2(6a^*-2)}{n-2} + 2.$$

Comparing coefficients with the left-hand side $a^*n^2 + (7a^* - 2)n + (6a^* - 2),$ the values match for the $n^2, n,$ constant and $\frac{1}{n-2}$ terms as required.

\end{proof}

\begin{proof} [Proof of Proposition 3.3.] Recall that $a_k$ is defined such that $$s_k = a_kk^2 + (7a_k - 2)k + (6a_k - 2)  = a_k(k^2 + 7k + 6) - 2(k+1).$$ Then, let $b_k$ be the maximum absolute difference between any two of $a_k, a_{k+1}$ and $a_{k+2}.$ We will find upper bounds for the absolute differences $|a_k - a_{k-1}|, |a_{k+1} - a_k|$ and $|a_{k+1} - a_{k-1}|$ in terms of $b_{k-3}.$ By definition, the largest of these is an upper bound for $b_{k-1}.$ 

By re-arranging the definition, $a_k = \frac{s_k + 2(k+1)}{k^2 + 7k + 6}, a_{k-1} = \frac{s_{k-1} + 2k}{k^2 + 5k}$ and $a_{k-3} = \frac{s_{k-3} + 2(k-2)}{k^2 + k - 6}.$ Then

$$a_{k-1} - a_{k-3} = \frac{s_{k-1} + 2k}{k^2 + 5k} -  \frac{s_{k-3} + 2(k-2)}{k^2 + k - 6}.$$

The modulus of this gives a lower bound for the value of $b_{k-3},$ which may be equal to the absolute difference $|a_{k-1} - a_{k-3}|$ or instead to one of $|a_{k-1} - a_{k-2}|, |a_{k-2} - a_{k-3}|,$ should either of those be larger.

To write the difference $a_k - a_{k-1}$ in terms of $a_{k-1} - a_{k-3},$ we start with the fact that $s_k = 2 + s_{k-1} + \frac{2}{k-2}s_{k-3}, $ from \eqref{recurrence}. Substituting the expressions for the $s_k, s_{k-1}, s_{k-3}$ gives

$$a_k(k^2 + 7k + 6) - 2(k+1) = 2 + a_{k-1}(k^2 + 5k) - 2k + \frac{2}{k-2}\left( a_{k-3}(k^2 + k - 6) - 2(k-2) \right)$$

and, re-arranging,

$$a_k = \frac{ a_{k-1}k(k+5) + 2a_{k-3}(k+3)  }{ (k+6)(k+1) }.$$

This expression tells us that $a_k$ is a weighted average of the values of $a_{k-1}$ and $a_{k-3},$ and we therefore know that all values of $a_i$ for $i > k+2$ lie between the minimum and maximum values of $a_k, a_{k+1}, a_{k+2},$ which means that $(b_k)$ is a non-strictly decreasing sequence.

From here, $a_k - a_{k-1} = \frac{ a_{k-1}\left[ k(k+5) - (k+6)(k+1) \right] + 2a_{k-3}(k+3)  }{ (k+6)(k+1) },$ and we can write this as $a_k - a_{k-1} = \frac{(-2k-6)(a_{k-1} - a_{k-3})}{(k+6)(k+1)}$ as required. We now have the upper bound

$$|a_k - a_{k-1}| \leq \frac{2k+6}{(k+6)(k+1)}b_{k-3}.$$

Replacing $k$ with $k+1$ in the final equation gives $a_{k+1} - a_k = \frac{(-2k - 8)(a_k - a_{k-2})}{(k+7)(k+2)},$ and so $|a_{k+1} - a_{k}| \leq \frac{2k+8}{(k+7)(k+2)} b_{k-2}.$ Since $(b_k)$ is a non-increasing sequence, $b_{k-3}$ is at least as great as $b_{k-2},$ and so

$$|a_{k+1} - a_{k}| \leq \frac{2k+8}{(k+7)(k+2)} b_{k-3}.$$

Adding together the previous two equations and using the triangle inequality gives the following bound for $|a_{k+1} - a_{k-1}|;$

$$|a_{k+1} - a_{k-1}| \leq \left( \frac{2k+6}{(k+6)(k+1)} + \frac{2k+8}{(k+7)(k+2)}  \right) b_{k-3}.$$

The largest of these three differences is the value of $b_{k-1}.$ Since all the terms are positive, the bound for $|a_{k+1} - a_{k-1}|$ is the largest of the three, and so it is a bound for all of them:$$b_{k-1} \leq \left( \frac{2k+6}{(k+6)(k+1)} + \frac{2k+8}{(k+7)(k+2)}  \right) b_{k-3}.$$

This inequality can be re-arranged to $b_{k-1} \leq \frac{2}{5}\left( \frac{2}{k+2} + \frac{3}{k+6} + \frac{3}{k+7} + \frac{2}{k+1} \right)b_{k-3}.$ Then, $\frac{2}{5}\left( \frac{2}{k+2} + \frac{3}{k+6} + \frac{3}{k+7} + \frac{2}{k+1} \right)b_{k-3} \leq \frac{2}{5} \left( \frac{10}{k+1} \right) b_{k-3} = \frac{4}{k+1} b_{k-3},$ and hence $$b_{k-1} \leq \frac{4}{k+1}b_{k-3}.$$

This shows that the sequence $(b_k)$ has limit zero, and this means that there is a limit for the sequence $(a_k),$ which we name $a.$

We will obtain more specific forms for even and odd values of $k.$ Since $s_0 = 0, s_1 = 0$ and $s_2 = 1,$ the first values of $a_k$ are $a_0 = \frac{1}{3}, a_1 = \frac{2}{7}$ and $a_2 = \frac{7}{24}.$ These values give $b_0 = \frac{1}{21}$ (the largest difference of any two). The relation between $b_{k-1}$ and $b_{k-3}$ gives, when used iteratively, that, for $k = 2l$ with $l \in \mathbb{N};$

$$b_{2l} \leq \frac{2^l}{21(l+1)!}.$$

For $k = 2m + 1$ with $m \in \mathbb{N},$ the same method (using that $s_3 = 3, a_3 = \frac{11}{36}$ and so $b_1 = \frac{5}{252}$ gives $b_{2m+1} \leq \frac{15\sqrt{\pi} \cdot 2^{m-2}}{252 \cdot \Gamma (m + \frac{5}{2})},$ which re-arranges to

$$b_{2m+1} \leq \frac{ \frac{15}{252} 4^m  }{(2m+3)!!},$$

where $x!!$ denotes the double factorial function of $x$ (the product of $x$ and only every second positive integer less than $x$).

\end{proof}

\begin{proof} [Proof of Proposition 3.4.] We will prove by induction that the sequences $(p_n)$ and $(q_n)$ are increasing for $n \geq 9.$ First, the base steps will be that these sequences are increasing from $n=9$ to $n=23.$ The relevant calculations were done by computer and are given in the appendix.

For the inductive step, note that $p_k = \frac{s_k - s_{k-1}}{k}$ by the definition of $s_k.$ We define $A_k = a(k^2 + 7k + 6) - 2(k+1);$ $A_k$ is defined similarly to $s_k,$ except for the presence of $a$ instead of $a_k.$ The differences $|s_k - A_k|$ are small as the differences between the estimates $a_k$ and the true value of $a$ are small. We will use this to approximate $\frac{s_k - s_{k+1}}{k}.$

The absolute difference between $s_k$ and $A_k$ is given by $$\left| a_k(k^2 + 7k + 6) - 2(k+1) - \left( a(k^2 + 7k + 6) - 2(k+1)  \right) \right| = |a - a_k|(k^2 + 7k + 6) $$  $$\leq b_{k-1} (k^2 + 7k + 6).$$ The absolute differences $|s_{k-1} - A_{k-1}| \leq b_{k-2}(k^2+5k)$ and $|s_{k-2} - A_{k-2}| \leq b_{k-3}(k+4)(k-1)$ follow.

We therefore know that $s_k$ has a value within $[A_k - b_{k-1}(k^2 + 7k + 6), A_k + b_{k-1}(k^2 + 7k + 6)],$ and $s_{k-1}$ has a value within $[A_{k-1} - b_{k-2}(k^2 + 5k), A_{k-1} + b_{k-2}(k^2 + 5k)].$ We can then find a range for $s_k - s_{k-1};$ it is maximised when $s_k$ is as large as possible and $s_{k-1}$ is as small as possible, and vice versa for when it is minimised.

Since $A_k - A_{k-1} = a(k^2 + 7k + 6) - 2(k+1) - \left( a(k^2 + 5k) - 2k  \right) = 2ak + 6a - 2,$ and so $ \frac{A_k - A_{k-1}} {k} = 2a + \frac{6a-2}{k},$ the value of $\frac{A_k - A_{k-1}}{k}$ is increasing in $k,$ since $6a - 2$ is negative. It only remains to show that this increase is enough to outweigh the possible error of $b_{k-1}(k^2 + 7k + 6) + b_{k-2}(k^2+5k)$ from the approximation of $s_k$ by $A_k.$

We compare the increase of $A_k - A_{k-1}$ to the total error. As $j$ increases from $k$ to $k+1,$ $A_j - A_{j-1}$ increases from $2a + \frac{6a - 2}{k}$ to $2a + \frac{6a - 2}{k+1}.$ This is an increase by $\frac{6a-2}{k+1} - \frac{6a - 2}{k} = \frac{2-6a}{k^2+k}.$ Comparing with the error of $b_{k-1}(k^2 + 7k + 6) + b_{k-2}(k^2+5k),$ using the previously obtained inequalities, the increase is larger for $k = 22$ and $k = 23$ (by calculation). We will show by induction that the statement $P(k): \frac{2 - 6a}{k^2 + k} \geq b_{k-1}(k^2+7k+6) + b_{k-2}(k^2+5k)$ is true for all $k \geq 22,$ by showing that $P(k)$ implies $P(k+2).$

The statement $P(k)$ can be re-arranged to $$2 - 6a \geq b_{k-1}(k^2+7k+6)(k^2+k) + b_{k-2}(k^2 + 5k)(k^2 + k).$$

Because $\frac{4(k^2 + 9k + 14)(k^2 + 5k + 6)}{(k+3)(k^2 + 7k + 6)(k^2 + k)} < 1$ as long as $k \geq 6,$ and $\frac{4(k^2+7k+6)(k^2+5k+6)}{(k+2)(k^2+5k)(k^2+k)} < 1$ as long as $k \geq 7,$ we can deduce from $P(k)$ that
\begin{align}
2 - 6a &\geq \frac{4(k^2 + 9k + 14)(k^2 + 5k + 6)}{(k+3)(k^2 + 7k + 6)(k^2 + k)} b_{k-1}(k^2+7k+6)(k^2+k) \notag \\ &+ \frac{4(k^2+7k+6)(k^2+5k+6)}{(k+2)(k^2+5k)(k^2+k)} b_{k-2}(k^2 + 5k)(k^2 + k). \notag
\end{align}

Proposition 3.3 tells us that $b_{k+1} \leq \frac{4}{k+3}b_{k-1}$ and $b_k \leq \frac{4}{k+2}b_{k-2}.$ Using these results and cancelling gives us that

$$2 - 6a \geq b_{k+1}(k^2 + 9k + 14)(k^2 + 5k + 6) + b_k(k^2 + 7k + 6)(k^2 + 5k + 6),$$

and, dividing by $k^2 + 5k + 6 = (k+2)^2 + (k+2),$

$$\frac{2 - 6a}{(k+2)^2 + (k+2)} \geq b_{k+1}((k+2)^2+7(k+2)+6) + b_{k}((k+2)^2+5(k+2)),$$

as required. Therefore, $P(k)$ implies $P(k+2),$ and since $P(22)$ and $P(23)$ are true, $P(k)$ is true for all $k \geq 22.$

We will use a similar argument to show that the sequence $(q_n)$ is increasing for $n \geq 23.$ Using the relation $q_k = \frac{1}{k} \left( 1 + \frac{2}{k}s_{k-1} \right),$ and that $|s_{k-1} - A_{k-1}| \leq b_{k-2}(k^2 + 5k),$ we know that $q_k \in [\frac{1}{k} \left( 1 + \frac{2}{k}A_{k-1} \right) - \frac{2}{k^2} b_{k-2}(k^2+5k), \frac{1}{k} \left( 1 + \frac{2}{k}A_{k-1} \right) + \frac{2}{k^2} b_{k-2}(k^2+5k)],$ or equivalently, using the definition $A_{k-1} = a(k^2 + 5k) - 2k,$

$$q_k \in  [2a + \frac{10a-3}{k} - \frac{2}{k^2} b_{k-2}(k^2+5k) ,   2a + \frac{10a-3}{k} + \frac{2}{k^2} b_{k-2}(k^2+5k) ].$$

As $j$ increases from $k$ to $k+1,$ our approximate value for $q_j$ increases from $2a + \frac{10a - 3}{k}$ to $2a + \frac{10a - 3}{k+1},$ an increase of $\frac{10a - 3}{k+1} - \frac{10a - 3}{k} = \frac{3 - 10a}{k(k+1)}.$ Computationally, comparing this with the possible error shows that, for $k = 22$ and $k=23,$ the increase is greater than the error. We will now proceed similarly to before, and demonstrate that, for the statement $P(k): \frac{3-10a}{k(k+1)} \geq \frac{2}{k^2}b_{k-2}(k^2 + 5k),$ we have that $P(k)$ implies $P(k+2).$

Multiplying both sides by $k(k+1)$ gives

$$3-10a \geq \frac{2b_{k-2}(k^2+5k)(k^2+k)}{k^2}.$$

Since $\frac{4(k^2+5k+6)(k^2+9k+14)k^2}{(k+2)(k^2+5k)(k^2+k)(k^2+4k+4)} < 1$ as long as $k \geq 5,$ we can write

$$3-10a \geq \frac{4(k^2+5k+6)(k^2+9k+14)k^2}{(k+2)(k^2+5k)(k^2+k)(k^2+4k+4)}  \frac{2b_{k-2}(k^2+5k)(k^2+k)}{k^2}.$$

Using that $b_k \leq \frac{4}{k+2}b_{k-2}$ (from Proposition 3.3) and cancelling, we have

$$3 - 10a \geq \frac{2b_k (k^2 + 9k + 14)(k^2 + 5k + 6)}{k^2 + 4k + 4},$$

and dividing both sides by $k^2 + 5k + 6) = (k+2)(k+3),$

$$\frac{3-10a}{(k+2)(k+3)} \geq \frac{2}{(k+2)^2}b_k((k+2)^2 + 5(k+2)),$$

which is the statement $P(k+2),$ therefore $P(k)$ implies $P(k+2),$ as required. Since $P(22)$ and $P(23)$ are true, $P(k)$ is true for all $k \geq 22$ by induction.

\end{proof}

\begin{proof} [Proof of Proposition 3.5.] We prove by strong induction the induction hypothesis $P(k):$ that, when $X$ starts in $MS(P_k),$ guesses of $2$ and $k-1$ are optimal for them (i.e. there are no other guesses that have a higher winrate).

We have thirteen base steps, covering the cases where $9 \leq n \leq 23,$ detailed in the appendix.

For the inductive step, we start by writing $P(\text{X wins }MS(P_{k+1})\text{ choosing }x).$ Since there is a probability $\frac{x-1}{k+1}$ that the mine is less than $x,$ in which case the game reduces to size $x-1,$ and $\frac{k+1-x}{k+1}$ that it is greater than $x,$ in which case the game reduces to size $k+1-x,$ we have   $$P(\text{X wins }MS(P_{k+1})\text{ choosing }x) = \frac{x-1}{k+1} q_{x-1} + \frac{k+1-x}{k+1} q_{k+1-x}.$$

We therefore need to find the integers $x$ between $1$ and $k+1$ that maximise $F(x) = (x-1)q_{x-1} + (k+1-x)q_{k+1-x}.$ By Proposition 3.4, and the casework in the appendix (specifically, note that $q_7$ lies between $q_{22}$ and $q_{23}$), we can find the rank order of the $q_n~ 's:$

\begin{enumerate}
\item The largest four values are $q_1 > q_4 > q_5 > q_8;$
\item For any $i,j \notin \{ 1,4,5,7,8 \}, $ with $i > j,$ we have $q_i > q_j;$
\item $q_{23} > q_7 > q_{22}.$
\end{enumerate}

If $k > 23,$ then the best possible sizes for regions to divide the game into are, with the best sizes first, sizes $1, 4, 5, 8, k, k-1$ and then other choices. If we choose $x=2,$ then the possible divisions are of sizes $1$ and $k-1.$ Therefore, if there is any better possibility, at least one of the possible resulting sizes of the game must be a size higher in the list than $k-1$ (otherwise all cases are worse than the worst case scenario when choosing $x=2,$ assuming the mine itself is not chosen, which always has equal probability.) The candidate choices for $x$ that fit this are $x=1$ (possible division into size $k$), $x=5$ (possible divisions into sizes $4$ and $k-4$), $x=6$ (possible divisions into sizes $5$ and $k-5$) and $x=9$ (possible divisions into sizes $8$ and $k-8$).
We will focus only on choices of $x$ less than  $\frac{k+1}{2};$ due to the symmetry in the game, choices of $x$ higher than this will have equivalent winrates to those already tested.

The relevant values of $F(x)$ are: $F(1) = kq_k, F(2) = q_1 + (k-1)q_{k-1}, F(5) = 4q_4 + (k-4)q_{k-4}, F(6) = 5q_5 + (k-5)q_{k-5}$ and $F(9) = 8q_8 + (k-8)q_{k-8}.$ Substituting the values for $q_1$ to $q_8$ gives $F(2) = 1 + (k-1)q_{k-1}, F(5) = 2.5 + (k-4)q_{k-4}, F(6) = \frac{80}{27} + (k-5)q_{k-5}$ and $F(9) = \frac{115}{24} + (k-8)q_{k-8}.$ Since $q_n \geq \frac{1}{2}$ for any $n,$ and $q_{k-1} > q_{k-4} > q_{k-5} > q_{k-8}$ from Proposition 3.3, $F(2)$ is greater than $F(5), F(6)$ and $F(9).$ This demonstrates that the choice $x=2$ is better than any other $x$ less than $\frac{k+1}{2}.$ We now just need to compare $F(2)$ with $F(1);$ we require that $F(2) - F(1) \geq 0.$

First, write $F(2) - F(1) = 1 + (k-1)q_{k-1} - kq_k.$ We will substitute $q_k$ and $q_{k-1}$ with expressions involving $s_k~'s.$ Recall that $q_n = \frac{1}{n}\left( 1 + \frac{2}{n} \sum_{i=1}^{n-1} ip_i \right) = \frac{1}{n}\left( 1 + \frac{2}{n} s_{n-1} \right).$ Substituting this in gives us that $F(2) - F(1) = 1 + (k-1)\frac{1}{k-1}\left( 1 + \frac{2}{k-1}s_{k-2} \right) - k \frac{1}{k} \left(1 + \frac{2}{k} s_{k-1}\right),$ which simplifies to $$F(2) - F(1) = 1 - 2 \left( \frac{s_{k-1}}{k} - \frac{s_{k-2}}{k-1} \right).$$

This is approximately equal to $1 - 2 \left( \frac{A_{k-1}}{k} - \frac{A_{k-2}}{k-1} \right) = 1 - 2(a(k+5) - 2 - a(k+4) - 2) = 1 - 2(a-4),$ with a possible error of $b_{k-2} (k+5) + b_{k-3}(k+4).$ The value of $1 - 2(a-4)$ is positive (and it is approximately $8.4$) and the decay of the sequence $(b_k)$ is easily $o(1/k)$ from Proposition 3.3, so the positive difference firmly outweighs the potential error for $k > 23.$ Specifically, for $k \geq 3,$ let $P(k)$ be the statement that $1 - 2(a-4) > b_{k-2}(k+5) + b_{k-3}(k+4),$ and we will prove that $P(k)$ implies $P(k+2).$ From Proposition 3.3, we have $b_{k-1} \leq \frac{4}{k+1}b_{k-3}$ and $b_k \leq \frac{4}{k+2} b_{k-2}.$ Then, as long as $k \geq 4,$ $\frac{4(k+7)}{(k+2)(k+5)} \leq 1$ and $\frac{4(k+6)}{(k+1)(k+4)} \leq 1.$ By the inductive hypothesis $P(k),$ we can therefore write

$$1 - 2(a-4) > \frac{4(k+7)}{(k+2)(k+5)}(k+5)b_{k-2} + \frac{4(k+6)}{(k+1)(k+4)}(k+4)b_{k-3}.$$

Simplifying this and subtituting the results from Proposition 3.3 gives $$1 - 2(a-4) > \frac{4}{k+2}b_{k-2}(k+7) + \frac{4}{k+1}b_{k-3}(k+6) > b_k(k+7) + b_{k-1}(k+6),$$

and the statement $P(k+2)$ is true.

The statements $P(3)$ and $P(4)$ are true; the low values of $a_k$ are $a_0 = \frac{1}{3}, a_1 = \frac{2}{7}, a_2 = \frac{7}{24}, a_3 = \frac{11}{36}, a_4 = \frac{3}{10}.$ Then, the low values of $b_k$ are $b_0 = \frac{1}{21}, b_1 = \frac{5}{252}, b_2 = \frac{1}{120}.$ Then we can verify $P(3): 1 - 2(a-4) > 8b_1 + 7b_0 = \frac{31}{63}$ and $P(4): 1 - 2(a-4) > 9b_2 + 8b_1 = \frac{589}{2520},$ are true. So, $P(k)$ is true for all $k \geq 3$ by induction.

This demonstrates that the choice $x=2$ is better than $x=1.$

\end{proof}

\begin{proof} [Proof of Proposition 3.6.] Since $p_k = \frac{s_k - s_{k-1}}{k},$ and the difference between this and $\frac{A_k - A_{k-1}}{k} = 2a + \frac{6a-2}{k}$ approaches zero and $\frac{6a-2}{k}$ approaches zero as $k$ approaches infinity, the limit for the sequence $(p_k)$ is $2a.$ By the same reasoning, using that the limit of $\frac{10a-3}{k}$ is zero, the limit for the sequence $(q_k)$ is also $2a.$

\end{proof}

\subsection{Nature of the other solutions of the $s_n$ recurrence}

We described in Proposition 3.2 one of the three linearly independent solutions of \eqref{recurrence}. We will demonstrate that the other two solutions are not hypergeometric. A hypergeometric solution $x(n)$ to a recurrence is one where there is a rational function $r(n)$ (an algebraic fraction with polynomial numerator and denominator) such that $x(n+1) = r(n)x(n)$ for all large enough $n \in \mathbb{N}.$

\begin{proposition} Other than the family of solutions described in Proposition 3.2, there are no other hypergeometric solutions of \eqref{recurrence} for $s_n.$
\end{proposition}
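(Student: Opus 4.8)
The plan is to avoid a full run of Petkov\v{s}ek's algorithm by exploiting the special shape of \eqref{recurrence}: after clearing the denominator it reads
\[
(n-2)s_n - (n-2)s_{n-1} - 2s_{n-3} = 2(n-2),
\]
in which the coefficient of the earliest term $s_{n-3}$ is the nonzero constant $-2$. I would argue in two stages: first show that every hypergeometric solution is in fact a rational function of $n$, and then classify the rational solutions directly, recovering exactly the family of Proposition 3.2.

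For the first stage, suppose $s$ is hypergeometric, so that $t(n) := s_{n+1}/s_n$ is a rational function for all large $n$. Writing $s_{n-1} = s_{n-3}\,t(n-3)t(n-2)$ and $s_n = s_{n-3}\,t(n-3)t(n-2)t(n-1)$ and substituting into the cleared recurrence gives $s_{n-3}\,R(n) = 2(n-2)$, where
\[
R(n) = (n-2)t(n-3)t(n-2)t(n-1) - (n-2)t(n-3)t(n-2) - 2
\]
is a rational function of $n$. If $R \equiv 0$ then $0 = 2(n-2)$ for all large $n$, which is false; hence $R \not\equiv 0$, so $R(n) \ne 0$ for all large $n$ and $s_{n-3} = 2(n-2)/R(n)$ coincides with a fixed rational function for all large $n$. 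Thus any hypergeometric solution is eventually equal to a rational solution of \eqref{recurrence}.

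For the second stage I would classify the rational solutions. Rearranging the cleared recurrence to isolate the earliest term gives $s_m = \tfrac12 (m+1)\bigl(s_{m+3} - s_{m+2}\bigr)$ (with $m = n-3$), an identity with polynomial coefficients and no denominator. Reading this as an identity of rational functions, suppose a rational solution had a pole and let $\alpha$ be a pole of largest real part; evaluating at $m = \alpha$ makes the right-hand side finite, since $\alpha+2$ and $\alpha+3$ lie strictly to the right and so are not poles, contradicting that $\alpha$ is a pole. Hence every rational solution is a polynomial. A degree count shows that the homogeneous operator sends $n^d$ to a polynomial with leading coefficient proportional to $(d-2)$, so the degree drops only at $d=2$; there are therefore no homogeneous polynomial solutions of degree $\ge 3$, and comparing coefficients in degrees $\le 2$ gives that the homogeneous polynomial solutions are exactly the scalar multiples of $(n+1)(n+6)=n^2+7n+6$. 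Together with the particular solution $-2(n+1)$ (the $a^*=0$ member of the family), this yields that the rational solutions are precisely $\gamma(n^2+7n+6)-2(n+1)$, i.e. the family $a^*n^2 + (7a^*-2)n + (6a^*-2)$ of Proposition 3.2. As these are polynomials they are hypergeometric, and combined with the first stage this shows they are the only hypergeometric solutions.

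The main obstacle is making the first stage fully rigorous: one must handle the dichotomy $R \equiv 0$ versus $R \not\equiv 0$ cleanly and promote ``eventually equal to a rational function'' to a genuine rational solution (using that two rational functions agreeing at infinitely many integers coincide, so that the isolating identity holds as an identity of rational functions). The rightmost-pole argument is the other delicate point, and it is precisely here that the structural fact ``the coefficient of $s_{n-3}$ is a nonzero constant'' is used: it is what allows the earliest term to be solved for with polynomial coefficients, so that no spurious denominators can appear and rational solutions are forced to be polynomials.
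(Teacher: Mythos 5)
Your proposal is correct in substance, but it takes a genuinely different route from the paper. The paper homogenizes \eqref{recurrence} into a fourth-order linear recurrence (accepting a spurious solution proportional to $n+1$) and then runs Petkov\v{s}ek's Hyper algorithm in full: six cases of monic factor pairs $A(n),B(n)$, each with its $P_i(n)$, leading coefficients $\alpha_i$, and roots $Z$ of $\sum_i \alpha_i Z^i=0$, plus two applications of Algorithm Poly, concluding that the only hypergeometric solutions of the auxiliary recurrence are the two polynomial ones and hence that the other solutions of \eqref{recurrence} are not hypergeometric. You avoid that machinery entirely by exploiting the inhomogeneity: because the right-hand side $2(n-2)$ is a nonzero polynomial, writing a hypergeometric solution via its rational ratio $r$ gives $s_{n-3}R(n)=2(n-2)$ with $R$ rational, the case $R\equiv 0$ is immediately contradictory, and so every hypergeometric solution is eventually equal to a fixed rational function; your rightmost-pole argument (valid precisely because the trailing coefficient of $s_{n-3}$ is the nonzero constant $-2$, so isolating the earliest term introduces no new denominators) then forces rational solutions to be polynomials, and the degree count plus coefficient comparison recovers exactly the family $a^*(n^2+7n+6)-2(n+1)$ of Proposition 3.2. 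Your route is shorter and more elementary; what it gives up is scope — it leans entirely on the nonzero forcing term, so it says nothing about hypergeometric solutions of the associated homogeneous (or homogenized) equation, which is the setting where the paper's Hyper computation operates and which would be needed if one wanted the stronger statement about the auxiliary fourth-order recurrence itself.

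Two small repairs are needed, neither fatal. First, the isolating identity should read $s_m = \tfrac12 (m+1)\left(s_{m+3}-s_{m+2}\right) - (m+1)$; you dropped the inhomogeneous term $-(m+1)$. Since that term is a polynomial it is finite at any putative rightmost pole, so the pole argument is unaffected, but the identity as displayed is not the rearrangement of \eqref{recurrence}. Second, your argument shows a hypergeometric solution agrees with a member of the family for all sufficiently large $n$; to conclude it \emph{is} that member (as a sequence), note that \eqref{recurrence} can also be solved backwards for $s_{n-3}$ — again because the trailing coefficient is a nonzero constant — so agreement at all large indices propagates down to all indices. With those two sentences added, the proof is complete.
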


\begin{proof}[Proof of Proposition 3.7.] We will use the algorithm `Hyper' of Marko Petkovsek \cite{hypergeometric} to find all the hypergeometric solutions of \eqref{recurrence}. Firstly, we find an auxiliary, homogenised recurrence. By substituting higher values of $n,$ we have $s_{n+3} = s_{n+2} + \frac{2}{n+1}s_n + 2$ and $s_{n+4} = s_{n+3} + \frac{2}{n+2}s_{n+1} + 2,$ and subtracting the second of these from the first gives $s_{n+4} - s_{n+3} = s_{n+3} - s_{n+2} + \frac{2}{n+2}s_{n+1} - \frac{2}{n+1}s_n,$ which re-arranges to the auxiliary recurrence

$$s_{n+4} - 2s_{n+3} + s_{n+2} - \frac{2}{n+2}s_{n+1} + \frac{2}{n+1}s_n = 0.$$

This homogenisation increases the order of \eqref{recurrence}, and will produce an additional spurious linearly independent solution - this solution is $\alpha(k+1)$ for constant $\alpha,$ which is not a solution of the original recurrence. We will use the Hyper algorithm to identify the two linearly independent polynomial solutions and establish that the remaining two solutions are not hypergeometric.

Using the definitions in \cite{hypergeometric}, we have $p_0(n) = 2(n+2), p_1(n) = -2(n+1), $ \linebreak $p_2(n) = (n+1)(n+2), p_3(n) = -2(n+1)(n+2)$ and $p_4(n) = (n+1)(n-2).$ Then, the monic factors $A(n)$ of $p_0(n)$ are $n+2$ and $1,$ and the monic factors $B(n)$ of $p_4(n-3)$ are $n-1, n-2$ and $1.$ Using the definition $P_i(n) = p_i(n) \prod_{j=0}^{i-1} A(n+j) \prod_{j=i}^{d-1}B(n+j),$ we have

\begin{align}
P_0(n) &= 2(n+2)B(n)B(n+1)B(n+2)B(n+3), \notag \\ \notag  P_1(n) &= -2(n+1)A(n)B(n+1)B(n+2)B(n+3), \\ \notag P_2(n) &= (n+1)(n+2)A(n)A(n+1)B(n+2)B(n+3),  \\ \notag P_3(n) &= -2(n+1)(n+2)A(n)A(n+1)A(n+2)B(n+3),  \\ \notag P_4(n) &= (n+1)(n+2)A(n)A(n+1)A(n+2)A(n+3). \notag
\end{align}

There are now six cases to study: $A(n)$ can be chosen to be either $n+2$ or $1,$ and $B(n)$ can be chosen to be any of $n-1,$ $n-2$ or $1.$ An overview of the results is as follows: the cases where exactly one of $A(n), B(n)$ is $1$ are degenerate (and yield no solutions); the cases with both $A(n) = n+2$ and $B(n) \neq 1$ also yield no solutions, with lengthier work; finally, the case with $A_n = B_n = 1$ yields the polynomial solution of the original recurrence and the linearly independent spurious polynomial solution. The auxiliary recurrence has order four, and must have four linearly independent solutions; since the others are not found by this algorithm, they are not hypergeometric. We will now detail these calculations.

In the case $A_n = B_n = 1,$ the expressions for the $P_i(n)$ are equal to those of the $p_i(n),$ and then $m=2$ (the highest degree of any of the $P_i(n)$ is $2.$ Then the coefficients of $n^2,$ as defined in \cite{hypergeometric}, are $\alpha_0 = \alpha_1 = 0, \alpha_2 = 1, \alpha_3 = -2$ and $\alpha_4 = 1;$ the solutions of $\sum_{i=0}^4 \alpha_i Z^i = 0,$ i.e., $Z^2 - 2Z^3 + Z^4 = 0,$ are $Z = 0, 1.$ Then, the next step of the algorithm, using $Z=1,$ recovers the original recurrence problem; we must solve the recurrence $\sum_{i=0}^4 p_i(n)C(n+i)$ (which is identical to the auxiliary recurrence) for polynomial solutions $C;$ solving by comparing coefficients gives the linearly independent solutions,  for any constants $a$ and $b,$ of $a(n^2 + 7n + 6)$ (which is the solution exhibited in Proposition 3.2) and $b(n+1),$ (which is the spurious solution, and is not a solution of the original recurrence).

An example of a degenerate case where one of $A(n), B(n)$ is $1$ is the case with $A(n) = 1, B(n) = n-1.$ In this case we have

\begin{align}
P_0(n) &= 2(n-1)n(n+1)(n+2)^2\notag , \\  P_1(n) &= -2n(n+1)^2(n+2)\notag , \\ P_2(n) &= (n+1)^2(n+2)^2\notag , \\ P_3(n) &= -2(n+1)(n+2)^2\notag , \\ P_4(n) &= (n+1)(n+2)\notag , \\
\end{align}

yielding $m=5$ and $\alpha_0 = 1,$ with $\alpha_i = 0$ for $i \in \{ 1,2,3,4 \}.$ Then, there are no solutions of $\sum_{i=0}^4 \alpha_i Z^i = 0,$ i.e. $Z^0 = 0.$ The calculations are virtually identical for the case with $A(n) = 1, B(n) = n-2;$ the degrees of all of the $P_i(n),$ and all the values of the $\alpha_i,$ remain the same.

The case with $A(n) = n+2, B(n) = 1$ is also very similar; for that case, we have
\begin{align}
P_0(n) &= 2(n+2) \notag , \\  P_1(n) &= -2(n+1)(n+2) \notag , \\ P_2(n) &= (n+1)(n+2)^2(n+3) \notag , \\ P_3(n) &= -2(n+1)(n+2)^2(n+3)(n+4) \notag , \\ P_4(n) &= (n+1)(n+2)^2(n+3)(n+4)(n+5) \notag , \\
\end{align}

yielding $m=6$ and $\alpha_4 = 1,$ with $\alpha_i = 0$ for $i \in \{ 0,1,2,3 \}.$ Then, there are no non-zero solutions of $\sum_{i=0}^4 \alpha_i Z^i = 0,$ i.e. $Z^4 = 0.$

This leaves the most involved cases where neither of $A(n)$ nor $B(n)$ are $1.$ In the case with $A(n) = n+2, B(n) = n-1,$ we have
\begin{align}
P_0(n) &= 2(n-1)n(n+1)(n+2)^2 \notag , \\  P_1(n) &= -2n(n+1)^2(n+2)^2 \notag , \\ P_2(n) &= (n+1)^2(n+2)^3(n+3) \notag , \\ P_3(n) &= -2(n+1)(n+2)^3(n+3)(n+4) \notag , \\ P_4(n) &= (n+1)(n+2)^2(n+3)(n+4)(n+5) \notag , \\
\end{align}

yielding $m=6,$ and, identically to in the case with $A(n) = B(n) = 1,$ $\alpha_0 = \alpha_1 = 0, \alpha_2 = 1, \alpha_3 = -2$ and $\alpha_4 = 1,$ and the non-zero solution $Z=1$ of $\sum_{i=0}^4 \alpha_i Z^i = 0.$

We factor out $(n+1)(n+2)^2$ and then search for polynomial solutions for $$(2n^2 - 2n)C(n) + (-2n^2-2n)C(n+1) + (n^3 + 6n^2 + 11n + 6)C(n+2) + (-2n^3 - 18n^2 - 52n - 48)C(n+3)$$$$ + (n^3 + 12n^2 + 47n + 60)C(n+4),$$

using Algorithm Poly from \cite{hypergeometric}. The values of the $c_{i,0}$ are the coefficients of the $n^3$ terms of the functions multiplied by $C(n+i);$ i.e. $c_{0,0} = 0, c_{1,0} = 0, c_{2,0} = 1, c_{3,0} = -2, c_{4,0} = 1,$ with the $c_{i,1}$ being similarly defined by the coefficients of the $n^2$ terms, and so on for the $c_{i,2}$ by the $n$ coefficients and $c_{i,3}$ by the constants.

For the $s=0$ step, we have $b_0^{(0)} = \sum_{i=0}^4 i^0 c_{i,0} = 0.$ Then, for the $s=1$ step, we have $b_0^{(1)} = \sum_{i=0}^4 i^0 c_{i,1} = 0$ (note that for the Poly algorithm, the value of $0^0$ is taken to be $1,$ and $b_1^{(1)} = \sum_{i=0}^4 i c_{i,0} = 0.$ Then, for the $s=2$ step, $b_0^{(2)} = \sum_{i=0}^d i^0 c_{i,2} = -2 -2 +11 - 52 + 47 = 2,$ $b_1^{(2)} = \sum_{i=0}^d i c_{i,1} = -2 + 2 \cdot 6 - 3 \cdot 18 + 4 \cdot 12 = 4,$ and $b_2^{(2)} = \sum_{i=0}^d i^2 c_{i,0} = 1 \cdot 2^2 - 2 \cdot 3^2 + 1 \cdot 4^2 = 2.$

Since the values were not zero, we move on at this step, with $b_0^{(2)} = 2, b_1^{(2)} = 4, b_2^{(2)} = 2.$ We must solve the polynomial $\sum_{j=0}^2 {N \choose j} b_j^{(2)} = 0$ for non-negative integer roots $N.$ This polynomial is $N^2 + 3N + 2 = 0,$ which has no such roots, so the output of the algorithm is that there are no hypergeometric solutions derived from this case.

Finally, we deal with the case $A(n) = n+2, B(n) = n-2,$ very similarly. We have

\begin{align}
P_0(n) &= 2(n-2)(n-1)n(n+1)(n+2) \notag , \\
P_1(n) &= -2(n-1)n(n+1)^2(n+2) \notag , \\
P_2(n) &= n(n+1)^2(n+2)^2(n+3) \notag , \\
P_3(n) &= -2(n+1)^2(n+2)^2(n+3)(n+4) \notag , \\
P_4(n) &= (n+1)(n+2)^2(n+3)(n+4)(n+5) \notag , \\
\end{align}

yielding $m=6$ and $\alpha_0 = \alpha_1 = 0, \alpha_2 = 1, \alpha_3 = -2$ and $\alpha_4 = 1$ with $Z=1$ once again. We factor out $(n+1)(n+2)$ and solve

$$(2n^3 - 6n^2 + 4n)C(n) + (-2n^3 + 2n)C(n+1) + (n^4 + 6n^3 + 11n^2 + 6n)C(n+2) + $$
$$(-2n^4 - 20n^3 - 70n^2 - 100n - 48)C(n+3) + (n^4 + 14n^3 + 71n^2 + 154n + 120)C(n+4).$$

The values of $c_{i,j}$ will be defined as the coefficient of $n^{4-j}$ in the function multipled by $C(n+i).$ Then, for the $s=0$ step,  $b_0^{(0)} = \sum_{i=0}^4 i^0 c_{i,0} = 0.$ For the $s=1$ step, $b_0^{(1)} = \sum_{i=0}^4 i^0 c_{i,1} = 0$ and $b_1^{(1)} = \sum_{i=0}^4 i c_{i,0} = 0.$ For the $s=2$ step, $b_0^{(2)} = \sum_{i=0}^d i^0 c_{i,2} = -6 + 11 - 70 + 71 = 6, b_1^{(2)} = \sum_{i=0}^d i c_{i,1} = -2 + 2 \cdot 6 - 3 \cdot 20 + 4 \cdot 14 = 6,$ and $b_2^{(2)} = \sum_{i=0}^d i^2 c_{i,0} = 1 \cdot 2^2 - 2 \cdot 3^2 + 1 \cdot 4^2 = 2.$ The polynomial $\sum_{j=0}^2 {N \choose j} b_j^{(2)} = 0$ is $N^2 + 5N + 6 = 0,$ which has no non-negative integer roots, so again there are no hypergeometric solutions derived from this case.

We have concluded that the only hypergeometric solutions of the auxiliary recurrence are the two polynomial solutions, one of which is one of the three solutions of the original third order recurrence. We therefore know that the other two solutions are non-hypergeometric (we also saw in the proof of Proposition 3.3 that they are rapidly decaying.)

\end{proof}

\section{Stars}

We will now describe the solution for the case where $G$ is a star, both with two perfect players and with random and exploitative players.

Theorem 3.1 of \cite{treesearch} implies that, for the Boppana-Lewis tree search game with two perfect players, the winning probability for each player depends only on the number of vertices $n,$ and not the structure of the tree. In our mis\`{e}re searching game this is false, and we will see a counterexample; the stars and paths will not have the same winning probabilities.

For our study of an exploitative player $X$ versus a random player $R,$ write $P(T)$ for the probability that player $X$ wins $MS(T)$ as the first player, and $Q(T)$ for the probability that they win as the second player.

A star graph $S_n$ has one `root' vertex, and its remaining $n-1$ vertices are leaves connected to this root. Theorem 3.1 of \cite{treesearch} implies that, in the Boppana-Lewis search, the first player wins with probability $\frac{1}{2}$ for even $n$ and $\frac{(n+1)/2}{n}$ for odd $n;$ their strategy will always be to guess a leaf of the star, as guessing the root would cut down the entire tree and reveal the target vertex to the opposing player.

In our mis\`{e}re searching game, the optimal players should guess the root vertex on their turn, which will leave only the poisoned vertex for the opponent. Therefore, the first player trivially wins with probability $\frac{n-1}{n},$ losing only if the root vertex is poisoned. This demonstrates that an equivalent result to Theorem 3.1 of \cite{treesearch} is not true for the mis\`{e}re search, as the probability that the first player wins $MS(S_n)$ is $\frac{n-1}{n}$, but for $MS(P_n)$ it is not.

If there is a random player $R$ and exploitative player $X,$ then $X$ similarly wins $MS(S_n)$ as the first player with probability $\frac{n-1}{n}$ using the same strategy; $P(S_n) = \frac{n-1}{n}.$ On the other hand, if the random player goes first, they will more than likely randomly select a leaf as their guess. The possibilities, assuming that $n \geq 4$ (otherwise, the graph is simply a path graph) are:
\begin{enumerate}
\item Player $R$ guesses a leaf, which has probability $\frac{n-1}{n}.$ Then, they win only with probability $\frac{1}{n}$ (if the poisoned vertex is the root vertex, which will be guessed by $X$ on their next turn).
\item Player $R$ guesses the root, which has probability $\frac{1}{n}.$ Then, they win with probability $\frac{n-1}{n}.$
\end{enumerate}
In total, the probability that $R$ wins $MS(S_n)$ as the first player is $\frac{2(n-1)}{n^2},$ and so $Q(S_n) =\frac{n^2 - 2n + 2}{n^2}.$ We have $\lim_{n \to \infty} P(S_n) = \lim_{n \to \infty} Q(S_n) = 1;$ as the size of the star approaches infinity, the probability that the exploitative player wins, going first or second, approaches $1.$ In the Boppana-Lewis game, the equivalent result was that the limit of the exploitative player's winning probability was $\frac{2}{3};$ cutting the root is a poor choice in that game, and the exploitative player still has an advantage as the random player may choose to cut the root, but the advantage is not as decisive as in the mis\`{e}re game where the exploitative player will almost surely win the game after their first turn.

We will now prove that, for any number of vertices $n,$ there is no better possible tree structure for the exploitative player than a star, regardless of whether they go first or second.

\begin{proof}[Proof of Theorem 1.3.] Firstly, clearly $P(T) \leq \frac{n-1}{n}$ for any tree $T$ on $n$ vertices, as $X$ always loses on their first turn with probability $\frac{1}{n}$ regardless of the structure of the tree. Since $P(S_n) = \frac{n-1}{n},$ this demonstrates that $P(T) \leq P(S_n).$

Let $n_G$ be the number of vertices in a general tree $G$ (and therefore we write $n_T = n.)$ To prove that $Q(T) \leq Q(S_n),$ we start by letting the first vertex chosen by $R$ be $v,$ and writing an expression for $Q(T)$ in terms of $P(S)$ for the different possible tree components $S$ that can emerge from $R's$ choice of $v.$ For each possible choice of $v,$ which occurs with probability $\frac{1}{n_T},$ we sum over these components $S,$ considering the probability that this component remains, which is $\frac{n_S}{n_T}$ (as the component remains if and only if one of its $n_S$ vertices is poisoned) multiplied by the probability that the exploitative player wins from there when the game continues on $S,$ which is $P(S).$ For each choice of $v,$ there is also a probability $\frac{1}{n_T}$ that $v$ is poisoned, in which case $X$ wins immediately, and we add this to our sum. This gives

$$Q(T) = \frac{1}{n_T} \sum_{v \in T} \left( \sum_{S \in C(T \setminus v)} \frac{n_S}{n_T} P(S) + \frac{1}{n_T} \right).$$

Using the previously proven $P(S) \leq \frac{n_S-1}{n_S}$ from the first part of the theorem, we have

$$Q(T) \leq \frac{1}{n_T} \sum_{v \in T} \left( \sum_{S \in C(T \setminus v)} \frac{n_S}{n_T} \frac{n_S-1}{n_S} + \frac{1}{n_T} \right).$$

Then

\begin{align}
Q(T) &\leq \frac{1}{n_T} \sum_{v \in T} \left( \sum_{S \in C(T \setminus v)} \frac{n_S-1}{n_T}  + \frac{1}{n_T} \right) \notag \\
&= \frac{1}{n_T^2} \sum_{v \in T}  \sum_{S \in C(T \setminus v)} (n_S - 1)  + \frac{1}{n_T}.\notag \\
\end{align}

We have $\sum_{S \in C(T \setminus v)} n_S = n_T - 1,$ and so $$\sum_{S \in C(T \setminus v)}(n_S - 1) = n_T - 1 - |C(T \setminus v)| = n_T - 1 - \text{deg}_T(v).$$

And so, since there are $n_T$ choices of $v,$ and the total degree of the vertices in a tree with $n$ vertices is $2(n-1):$
\begin{align}
Q(T) &\leq \frac{1}{n_T^2} \left( n_T(n_T - 1) \sum_{v \in T} \text{deg}_T(v) \right) + \frac{1}{n_T} \notag \\
&= \frac{1}{n_T^2} \left( n_T(n_T - 1) - \sum_{v \in T} \text{deg}_T(v) \right) + \frac{1}{n_T} \notag \\
&= \frac{1}{n_T^2}  (n_T - 2)(n_T - 1)  + \frac{n_T}{n_T^2} \notag \\
&= \frac{n_T^2 - 2n_T + 2}{n_T^2} = Q(S_n). \notag
\end{align}

\end{proof}

\section{Possible directions for future research}

\subsection{Extensions to spider graphs, caterpillar graphs and multi-pile Nim analogues}

The main question emerging from this work is to explain the behaviour of the mis\`{e}re search game on other trees. We will suggest two possible types of trees for study, with the idea that results on these trees may help to develop an understanding of how various types of connectivity present in trees can change the results, and how this behaviour can be described for a general tree. We will demonstrate how the game played on these types of trees can be coupled with a probabilistic game reminiscent of multi-pile Nim.

A spider (also known as a starlike graph), similarly to a star graph, has a single root vertex, and multiple `legs;' paths of vertices extending from the root.  Let $S_{\lambda_1, \lambda_2, ..., \lambda_k}$ denote the spider with $k$ legs of non-increasing lengths $\lambda_1, \lambda_2, ..., \lambda_k$ \cite{spider}. We will present a coupling between the game played on a spider and a multi-pile Nim style game.

\subsubsection{Nim-style coupling for the spider graph}

For a spider graph, write the current state of the game as a set of the leg lengths, i.e. $\{\lambda_1, \lambda_2, ..., \lambda_k\}.$ On a player's turn, they have two options:
\begin{itemize}
\item Choose one of the legs $\lambda_i$ for some $i,$ and a positive integer `cut' value $c \leq \lambda_i.$
\item Choose to cut the tree down (equivalent to guessing the root vertex).
\end{itemize}
In the first case, the player is guessing the $c$th vertex from the end of the relevant leg. With probability $\frac{n-c}{n},$ the poisoned vertex will be in the large remaining tree component, $c$ vertices will be cut from the end of the leg, and the value $\lambda_i$ in the list will be replaced by $\lambda_i - c.$ With probability $\frac{c-1}{n},$ the poisoned vertex occurs further towards the end of the path than the guessed vertex, and the game reduces to a path of length $c.$ Finally, as always, it is possible that the player guessed the poisoned vertex and loses (probability $\frac{1}{n}).$

In the second case where the player guesses the root vertex, they will lose with probability $\frac{1}{n},$ and otherwise the game will reduce to a path with a length of one of the $\lambda_i$ values, with probabilities $\frac{\lambda_i}{n}$ for each value.

This game, played on sets of numbers, is isomorphic to the mis\`{e}re search on a spider, and it may be that arguments involving concepts similar to Nim-sums, a bitwise-XOR calculation \cite{nim}, is useful, but the situation is complicated by the probabilistic nature of the game and the additional possible move of cutting down the entire tree.

A caterpillar graph features a path, but the vertices on the path may have any number of additional leaf vertices attached to them \cite{caterpillar}. A similar idea can be used to write a coupling between the tree searching games on caterpillar graphs and a multi-pile Nim style game, with extra possible moves that remove multiple piles at once with some probability.

\subsection{Other extensions}

Boppana and Lewis suggested several other possible extensions other than the mis\`{e}re search. Among these, for example, were a model where players guess edges rather than vertices, a model more closely reminiscent of the random process of `cutting down trees' of \cite{meir-moon}, a new setting for the `gold-grabber game' of Seacrest and Seacrest \cite{gold} and a continuous model where players must guess within a distance $\epsilon$ of a target in the Euclidean space $\mathbb{R}_n.$ They also pointed out that, while study so far has focused only on trees, it is also possible to play a similar game on general graphs, by using an extension of binary search from \cite{graph_binary_search}, wherein guessers are told which edge from their vertex is on the shortest path to the secret vertex. 

Guess What on Novel Games also allows for the possibility of more than two players \cite{novelgames}. In that case, when a player guesses the mine, they are eliminated and do not make any more guesses. The other players do not learn the eliminated player's guess, only that they were eliminated. This greatly increases the state space of the game, as the information that a player playing a known strategy has been eliminated will change the probabilities that each number, or vertex, is poisoned.

\section{Appendix}

\textbf{Base case calculations of $\bm{p_n}$ and $\bm{q_n}$ values, for low $\bm{n.}$}

First, $p_1 = 0$ ($X$ must guess the mine) and for the same reason $q_1 = 1.$ Therefore, $s_1 = 0.$ Then, $p_2 = q_2 \cdot \frac{1}{2},$ and $p_3 = \frac{2}{3}$ ($X$ guesses $2$ and wins if it is not the mine). For $q_3, R$ wins with probability $\frac{2}{3}$ if they choose $2,$ and otherwise with probability $\frac{1}{3},$ giving $q_3 = \frac{1}{3} \cdot \frac{1}{3} + \frac{2}{3} \cdot \frac{2}{3} = \frac{5}{9}.$ Using these $p_i$ values, we have $s_2 = 1$ and $s_3 = 3.$

From there, the relations $p_n = \frac{2}{n} + \frac{2}{n(n-2)}s_{n-3}$ and $q_n = \frac{1}{n}\left( 1 + \frac{2}{n}s_{n-1}\right)$ can be used. We used the following Python program to calculate the values in the sequences $(s_n), (p_n), (q_n)$:

\begin{lstlisting}[breaklines]
#### S values (recurrence)

maxN = 1000000 # Calculate up to this value of n.
print_precision = 100000 # Print the output when n & print_precision is 0.

print("S VALUES\n")

S = ["n = 0", 0, 1, 3]

for i in range(4,maxN + 1):
    S.append(2 + S[i-1] + 2*S[i-3]/(i-2))
    if i % print_precision == 0:
        print(i, S[i])

#### p values

print("\np_k VALUES\n")

p = ["n = 0",0]

for i in range(2,maxN + 1):
    p.append( (S[i] - S[i-1])/i )
    if i % print_precision == 0:
        print(i, p[i])

#### q values

print("\nq_k VALUES\n")

q = ["n = 0",1]

for i in range(2,maxN + 1):
    q.append( (1/i)*(1 + (2/i)*S[i-1]) )
    if i % print_precision == 0:
        print(i, q[i])

#### a_k values

print("\na_k VALUES\n")

a = ["n=0"]

for i in range(1,maxN + 1):
    a.append( (S[i] + 2*i + 2) / (i**2 + 7*i + 6) )
    if i % print_precision == 0 or i == 999998 or i == 999999:
        print(i, a[i])

\end{lstlisting}

\begin{center}
\begin{tabular}{||c | c | c | c||} 
 \hline
 \( n \) & \( p_n \) & \( q_n \) & \( s_n \) \\ [0.5ex] 
 \hline\hline
 2 & 0.5 & 0.5 & 1.0 \\ 
 \hline
 3 & 0.6666666666666666 & 0.5555555555555555 & 3.0 \\ 
 \hline
 4 & 0.5 & 0.625 & 5.0 \\ 
 \hline
 5 & 0.5333333333333334 & 0.6000000000000001 & 7.666666666666667 \\ 
 \hline
 6 & 0.5833333333333335 & 0.5925925925925926 & 11.166666666666668 \\ 
 \hline
 7 & 0.5714285714285714 & 0.5986394557823129 & 15.166666666666668 \\ 
 \hline
 8 & 0.5694444444444446 & 0.5989583333333334 & 19.722222222222225 \\ 
 \hline
 9 & 0.5767195767195766 & 0.598079561042524 & 24.912698412698415 \\ 
 \hline
 10 & 0.5791666666666668 & 0.5982539682539684 & 30.704365079365083 \\ 
 \hline
 11 & 0.5802469135802468 & 0.5984192575101667 & 37.0870811287478 \\ 
 \hline
 12 & 0.5818783068783068 & 0.5984316823437194 & 44.06962081128748 \\ 
 \hline
 13 & 0.5832778332778332 & 0.5984570510211537 & 51.65223264389931 \\ 
 \hline
 14 & 0.584370013437474 & 0.5984921698357071 & 59.833412832023946 \\ 
 \hline
 15 & 0.5853294442183335 & 0.5985192251735463 & 68.61335449529895 \\ 
 \hline
 16 & 0.586180648606244 & 0.598541831994523 & 77.99224487299885 \\ 
 \hline
 17 & 0.5869287280943053 & 0.5985622482560474 & 87.97003325060204 \\ 
 \hline
 18 & 0.5875927395506871 & 0.5985804521642101 & 98.54670256251441 \\ 
 \hline
 19 & 0.5881872747554112 & 0.5985966900970326 & 109.72226078286722 \\ 
 \hline
 20 & 0.5887224069477888 & 0.5986113039143363 & 121.496708921823 \\ 
 \hline
 21 & 0.5892065291354107 & 0.5986245302577006 & 133.87004603366663 \\ 
 \hline
 22 & 0.5896466399221238 & 0.5986365538581266 & 146.84227211195335 \\ 
 \hline
 23 & 0.5900484841483349 & 0.5986475316141904 & 160.41338724736505 \\ 
 \hline
 24 & 0.5904168410366163 & 0.5986575946089063 & 174.58339143224384 \\ 
 \hline
\end{tabular}
\end{center}

From the output, it can be seen that, for $9 \leq n \leq 24,$ both of the sequences $(p_n)$ and $(q_n)$ are increasing, as required in the proof of Proposition 3.4.

We also calculated the values of $a_n$ up to $n = 10^6,$ sufficient to find $a$ to 13 decimal places.

\begin{center}
\begin{tabular}{||c c ||} 
 \hline
 n & $a_n$ \\ [0.5ex] 
 \hline\hline
 100000 & 0.29944452190971244 \\
\hline
200000 & 0.2994445219097123 \\
\hline
300000 & 0.2994445219097122 \\
\hline
400000 & 0.29944452190971216 \\
\hline
500000 & 0.29944452190971216 \\
\hline
600000 & 0.29944452190971216 \\
\hline
700000 & 0.2994445219097123 \\
\hline
800000 & 0.29944452190971255 \\
\hline
900000 & 0.29944452190971266 \\
\hline
999998 & 0.29944452190971205 \\
\hline
999999 & 0.29944452190971205 \\
\hline
1000000 & 0.29944452190971205 \\ [1ex] 
\hline
\end{tabular}
\end{center}

The value of $a$ to seventeen decimal places is $0.29944452190971205,$ and therefore the limiting winning probability for $X$ is approximately $0.598889043819424 ~(2a).$

\textbf{Base case calculations for the best strategy for $\bm{X,}$ for low n.}

A Python algorithm was implemented to calculate the best strategy for $X,$ inductively with $n.$ The winning probabilities for $X$ for all of $MS(P_1), MS(P_2), ..., MS(P_n)$ with $X$ playing first and second that have previously been calculated are used to calculate the best strategy and winning probability for $X$ in $MS(P_{n+1}),$ by trying and comparing all possible strategies. The program is given below.

\begin{lstlisting}[breaklines]
Rf_probs = ["undefined",1]
Xf_probs = ["undefined",0]
Xf_strategies = ["undefined",[1]]

N = 30

for n in range(2,N+1):

    ### R first.

    R_total = 0

    if n % 2 == 0:
        for i in range(1,int(n/2)+1):
            if i > 1:
                R_total = R_total + (2/n) * ( ((i-1)/n)*(1 - Xf_probs[i-1]) + ((n-i)/n)*(1 - Xf_probs[n-i]) )
            else:
                R_total = R_total + (2/n) * ( ((n-i)/n)*(1 - Xf_probs[n-i]) )
    else:
        for i in range(1,int((n+1)/2)):
            if i > 1:
                R_total = R_total + (2/n) * ( ((i-1)/n)*(1 - Xf_probs[i-1]) + ((n-i)/n)*(1 - Xf_probs[n-i]) )
            else:
                R_total = R_total + (2/n) * ( ((n-i)/n)*(1 - Xf_probs[n-i]) )
        i = (n+1)/2
        R_total = R_total + (1/n) * ((n-1)/n) * (1 - Xf_probs[int((n-1)/2)])

    Rf_probs.append(1 - R_total)

    ### X first.

    X_best_prob = 0
    X_best_choice = []

    if n % 2 == 0:
        for i in range(1,int(n/2)+1):
            if i > 1:
                X_current = ( ((i-1)/n) * Rf_probs[i-1] +  ((n-i)/n) * Rf_probs[n-i] )
            else:
                X_current = ( ((n-i)/n) * Rf_probs[n-i] )
            if X_current >= X_best_prob:
                if X_current == X_best_prob:
                    X_best_choice.append(i)
                else:
                    X_best_choice = [i]
                X_best_prob = X_current
    else:
        for i in range(1,int((n+3)/2)):
            if i > 1:
                X_current = ( ((i-1)/n) * Rf_probs[i-1] +  ((n-i)/n) * Rf_probs[n-i] )
            else:
                X_current = ( ((n-i)/n) * Rf_probs[n-i] )
            if X_current >= X_best_prob:
                if X_current == X_best_prob:
                    X_best_choice.append(i)
                else:
                    X_best_choice = [i]
                X_best_prob = X_current

    Xf_probs.append(X_best_prob)
    Xf_strategies.append(X_best_choice)
\end{lstlisting}

The output lists contain the following information:
\begin{enumerate}
\item \verb|Rf_probs| gives the probabilities, starting from $n=1$ at (zero-indexed) position $1,$ that $X$ wins if $R$ plays first in $MS(P_n);$
\item \verb|Xf_probs| gives the probabilities, starting from $n=1$ at (zero-indexed) position $1,$ that $X$ wins if $X$ plays first in $MS(P_n);$
\item \verb|Xf_strategies| gives the best number(s) to choose for $X,$ starting from $n=1$ at (zero-indexed) position $1,$ in $MS(P_n).$ When $n$ is even, only the first half of the numbers ($1, 2, ..., n/2$) are considered, and similarly, when $n$ is odd, only the numbers up to the median are considered ($1, 2, ..., (n+1)/2),$ as the remaining choices have symmetric outcomes. If there were multiple equally good choices among these, all of them would be included in the output.
\end{enumerate}

The calculations are done for $n$ up to the specified value of $N$ at the start of the code.

Output with $N=23$ is shown below.

\begin{lstlisting}[breaklines]
['undefined', [1], [1], [2], [2], [2], [2], [2], [2], [2], [2], [2], [2], [2], [2], [2], [2], [2], [2], [2], [2], [2], [2], [2]]
\end{lstlisting}

Note that the result 'undefined' represents the case $n=0.$
This output shows that, when $3 \leq n \leq 23,$ choosing $2$ or $n-1$ is strictly better than any other choice for $X,$ as required for the base steps in the proof of Proposition 3.5. 
\linebreak

\end{document}